\documentclass[preprint,12pt]{elsarticle}

\usepackage{amssymb}
\usepackage{amsmath}
\usepackage{xcolor}
\usepackage[a4paper, total={7in, 8in}]{geometry}

\usepackage{cancel}
\usepackage[section]{placeins}
\usepackage{pgfplots}
\usepackage{amsmath}
\usepackage{mathrsfs}
\usetikzlibrary{arrows}
\usepackage{accents}
\usepackage{amsmath, amsthm, amsfonts, amssymb, stmaryrd}
\usepackage[colorlinks = true, urlcolor = blue]{hyperref}
\usepackage{pst-node}
\usepackage[extra]{tipa}
\usepackage{pstricks}
\usepackage{tikz}
\usepackage[spanish,USenglish]{babel}
\usepackage[latin1]{inputenc}
\usepackage[T1]{fontenc}
\usepackage{graphicx}
\usepackage{url}
\usepackage{subfigure}
\usepackage{comment}
\usepackage{tikz}
\usepackage{amsmath}
\usetikzlibrary{matrix, arrows.meta, positioning}

\usepackage{colortbl} 
\usepackage{soul}
\usepackage{enumerate}
\usetikzlibrary{shapes.geometric}
\usepackage{graphicx}
\usepackage[ruled,vlined,linesnumbered]{algorithm2e}
\DontPrintSemicolon            
\SetKwInput{KwIn}{Input}       
\SetKwInput{KwOut}{Output}     
\newcommand{\reduline}[1]{\setulcolor{red}\ul{#1}}

\hypersetup{colorlinks=true, linkcolor=blue}

\newtheorem{thm}{Theorem}[section]

\newtheorem{cor}[thm]{Corollary}
\newtheorem{lem}[thm]{Lemma}
\newtheorem{prop}[thm]{Proposition}
\theoremstyle{definition}
\newtheorem{notn}[thm]{Notation}
\newtheorem{defn}[thm]{Definition}
\theoremstyle{remark}
\newtheorem{rem}[thm]{Remark}

\begin{document}

\begin{frontmatter}

\title{
Extremal degree-based indices of general polyomino chains via dynamic programming}

\author[]{Manuel Montes-y-Morales}
\author[]{Sayl\'e Sigarreta}
\author[]{Hugo Cruz-Su\'arez}

\address{Facultad de Ciencias F\'isico Matem\'aticas, Benem\'erita Universidad Aut\'onoma de Puebla,\\
Ave. San Claudio y R\'io Verde, Col. San Manuel, Ciudad Universitaria,\\
Puebla, Puebla 72570, M\'exico}

\begin{abstract}
In this paper, we develop a dynamic programming framework for identifying extremal general polyomino chains with respect to degree-based topological indices. As a concrete application, we resolve an open problem posed in 2015 by determining, for any given number of squares, the general polyomino chains that maximize the generalized Randi\'c index with parameter $\alpha=-1$. We show that the extremal configurations depend explicitly on the residue class of the number of squares modulo 4. Beyond this specific result, the proposed dynamic programming approach provides a constructive and systematic methodology for tackling extremal problems in graph theory.
\end{abstract}

\begin{keyword}
Polyomino chains, Generalized Randi\'c index, Extremal graph theory, Degree-based indices, Dynamic programming.

\end{keyword}

\end{frontmatter}



\section{Introduction}\label{s1}

Chemical graph theory provides a fruitful interface between discrete mathematics and chemistry, in which a molecule is modeled by a graph whose vertices represent atoms and edges represent chemical bonds. Quantitative structure-property relationships are often captured through topological indices, i.e., numerical descriptors extracted from the underlying molecular graph \cite{a8}. Among the many families of such descriptors, degree-based indices stand out due to their conceptual simplicity and their strong performance in applications \cite{a6}. A widely used example is the generalized Randi\'c index, which has been reported to correlate with physicochemical properties such as the solubility of alkanes in water \cite{a10}.\\

A natural testing ground for extremal problems involving degree-based indices is provided by graph families with a clear combinatorial structure. In this direction, polyomino systems form a classical class of planar graphs obtained by gluing unit squares edge-to-edge \cite{a17}. These objects arise in several modeling contexts, including polymers, crystal lattices, and other grid-like molecular structures \cite{a13}. A polyomino system whose inner dual graph is a path is called a polyomino chain. The study of degree-based indices on polyomino chains has attracted considerable attention, including extremal results \cite{a18,a19,a7}.\\

In this work, we focus on the broader class of general polyomino chains, i.e., polyomino chains without additional growth constraints. In particular, we adopt the convention that polyomino chains refer to the restricted (directed-growth) setting, while general polyomino chains refer to chains satisfying only the polyomino-system definition and the path condition on the inner dual. Moving from the restricted to the general class considerably enlarges the configuration space and, importantly, breaks the one-to-one correspondence between chains and simple growth encodings. This creates a genuine obstacle for extremal analysis: local descriptions remain useful, but they no longer uniquely determine a globally realizable geometry.\\

Our main contribution is a dynamic programming framework that identifies extremal general polyomino chains for any degree-based index. The key idea is to encode chains by local ``actions'' whose contributions to the target index depend only on bounded local information, enabling a recursion with optimal substructure. The resulting method yields

\begin{enumerate}
    \item [(i)] explicit recurrences for the optimal value as a function of the number of squares, and
    
    \item[(ii)] a constructive algorithm that outputs at least one extremal general polyomino chain for each size.
\end{enumerate}

This provides a unified, reusable approach to extremal questions for degree-based indices on polyomino-type families.\\

As a main application of our framework, we focus on the generalized Randi\'c index $R_{\alpha}$. In particular, we analyze the case $\alpha=-1$, namely $R_{-1}$, and thereby solve a problem posed in 2015 \cite{Rada2015} by determining, for every fixed number of squares $n$, which general polyomino chains maximize $R_{-1}$. More precisely, the maximizers fall into explicit families that depend on the residue class of $n$ modulo $4$. Overall, our results provide a constructive methodology for tackling extremal problems in graph theory within recursive families.

\section{Polyomino Chains and General Polyomino Chains}\label{s2}

The analysis in this paper centers on a particular family of topological indices known as \emph{degree-based indices}. These indices are defined by a general formula of the form:

\begin{equation}\label{TI}
   TI_f(G)= \sum_{uv\in E(G)} f(d_u, d_v),
\end{equation}
where $G$ is a graph, $E(G)$ is the set of edges of $G$, $d_u$ and $d_v$ are the degrees of the vertices $u$ and $v$, respectively, and $f$ is a real-valued symmetric function, that is, $f(x, y) = f(y, x)$ for all $x, y \in \{1, 2, \dots\}$.

\begin{defn}\label{polyomino}
Let $G$ be a graph.
\begin{enumerate}
    \item[a)] A graph $G$ is a polyomino system if it is a finite, 2-connected planar graph in which every interior face (called a \emph{cell}) is a unit square of side length one.
    \item[b)] The \emph{inner dual graph} of a polyomino system $G$ is the plane graph in which each vertex represents a cell, and two vertices are adjacent if and only if their corresponding cells share a common edge.
    \item[c)]  A \emph{polyomino chain} is a polyomino system $G$ whose inner dual graph is a path. Equivalently, a polyomino chain can be constructed iteratively by adding unit squares one at a time, according to the order induced by this path.

\end{enumerate}
\end{defn}

\begin{rem}
We call \emph{restricted polyomino chains} those obtained by adding each new cell only to the right or below the last square. This is a rotated instance of a cell-growth restriction (see  \cite{DelestViennot84} for the cell growth problem viewpoint). 
To motivate the general class, we now introduce $PC_5$ (Fig.~\ref{f1}), which is not captured by the restricted construction; see also \cite{Redelmeier81} for the unrestricted enumeration viewpoint. 
\end{rem}


\begin{figure}[h!]

 \centering
\begin{tikzpicture}[scale=0.6, transform shape]

\draw (0,0) rectangle (1,1);
\draw (1,0) rectangle (2,1);
\draw (1,-1) rectangle (2,0);
\draw (1,-2) rectangle (2,-1);
\draw (0,-2) rectangle (1,-1);

\node[fill=black, draw, circle, inner sep=2pt] (n5) at (0,0) {};
\node[fill=black, draw, circle, inner sep=2pt] (n5) at (0,1) {};
\node[fill=black, draw, circle, inner sep=2pt] (n6) at (1,0) {};
\node[fill=black, draw, circle, inner sep=2pt] (n7) at (1,1) {};
\node[fill=black, draw, circle, inner sep=2pt] (n8) at (1,-1) {};
\node[fill=black, draw, circle, inner sep=2pt] (n7) at (1,-2) {};
\node[fill=black, draw, circle, inner sep=2pt] (n8) at (0,-2) {};

\node[fill=black, draw, circle, inner sep=2pt] (n9) at (0,-1) {};
\node[fill=black, draw, circle, inner sep=2pt] (n10) at (2,1) {};
\node[fill=black, draw, circle, inner sep=2pt] (n11) at (2,0) {};
\node[fill=black, draw, circle, inner sep=2pt] (n12) at (2,-1) {};
\node[fill=black, draw, circle, inner sep=2pt] (n12) at (2,-2) {};

\end{tikzpicture}
    \caption{The eight restricted five-square chains (right/below rule) are not isomorphic to $PC_5$. Moreover, for $\alpha=-1$, their generalized Randi\'c values are pairwise distinct.}
    \label{f1}
\end{figure}
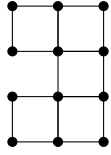

In what follows, we adopt the convention that the term \emph{polyomino chains} refers to the \emph{restricted} case, whereas the term \emph{general polyomino chains} refers to chains satisfying Definition \ref{polyomino} $c)$ without additional constraints. 

As in previous work on polyomino chains \cite{a6, a7, a0}, one can describe them as sequences of links, where each link encodes how the addition of a square changes the direction of the chain. We now extend this idea to general polyomino chains, where a more careful analysis is required: unlike the restricted case, there is no longer a unique notion of change in direction. 
To this end, we first clarify the notions of \emph{direction} and \emph{orientation} for general polyomino chains.

\begin{defn}\label{orientacion}
Let $PC_n$ be a general polyomino chain with $n$ unit squares embedded in $\mathbb{R}^2$.
\begin{enumerate}
    \item[a)] 
    The chain is \emph{horizontal} (resp.\ \emph{vertical}) at step $i\ge2$ if the $(i)$-th square is placed to the right/left (resp.\ up/down) of the $(i-1)$-th. The corresponding \emph{orientations} are encoded by the instruction set
    \begin{align*}
        \mathcal{I}=\{R,L,U,D\}\quad\text{(right, left, up, down)}.
    \end{align*}
    Although introduced for the chain, direction/orientation at step $i$ depend only on how the $(i)$-th square is attached; thus we may regard the $(i)$-th square itself as carrying them. As a convention, since all two-square chains are isomorphic, we fix $PC_2$ to be horizontal with right orientation (Fig.~\ref{f2}).
\begin{figure}[h!]

 \centering
\begin{tikzpicture}[scale=0.6, transform shape]

\draw (3,0) rectangle (4,1);

\draw (4,0) rectangle (5,1);

(n4) at (1,1) {};
\node[fill=black, draw, circle, inner sep=2pt] (n5) at (3,0) {};
\node[fill=black, draw, circle, inner sep=2pt] (n6) at (3,1) {};
\node[fill=black, draw, circle, inner sep=2pt] (n7) at (4,0) {};
\node[fill=black, draw, circle, inner sep=2pt] (n8) at (4,1) {};
\node[fill=black, draw, circle, inner sep=2pt] (n7) at (5,1) {};
\node[fill=black, draw, circle, inner sep=2pt] (n8) at (5,0) {};

\end{tikzpicture}

    \caption{The graph $PC_2$.}

    \label{f2}
\end{figure}
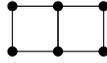
    \item[b)] 
    For $i\ge2$, the $(i+1)$-th link (governing the placement of the $(i+1)$-th square) is defined as follows:
    \begin{enumerate}\itemsep2pt
        \item[1.] \emph{Link type 1} ($L_{i+1}=1$): the $(i+1)$-th square has the \emph{same direction} as the $i$-th one (Fig.~\ref{f3}).
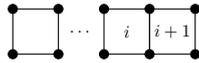
\begin{figure}[h!]

 \centering
\begin{tikzpicture}[scale=0.6, transform shape]

\draw (3,0) rectangle (4,1);

\draw (5,0) rectangle (6,1);

\draw (6,0) rectangle (7,1);

(n4) at (1,1) {};
\node[fill=black, draw, circle, inner sep=2pt] (n5) at (3,0) {};
\node[fill=black, draw, circle, inner sep=2pt] (n6) at (3,1) {};
\node[fill=black, draw, circle, inner sep=2pt] (n7) at (4,0) {};
\node[fill=black, draw, circle, inner sep=2pt] (n8) at (4,1) {};
\node[fill=black, draw, circle, inner sep=2pt] (n7) at (5,1) {};
\node[fill=black, draw, circle, inner sep=2pt] (n8) at (5,0) {};

\node[fill=black, draw, circle, inner sep=2pt] (n9) at (6,0) {};
\node[fill=black, draw, circle, inner sep=2pt] (n10) at (6,1) {};
\node[fill=black, draw, circle, inner sep=2pt] (n11) at (7,1) {};
\node[fill=black, draw, circle, inner sep=2pt] (n12) at (7,0) {};

\node at (4.5,0.5) {\small $\dots$};
\node at (6.5,0.5) {\small $i+1$};
\node at (5.5,0.5) {\small $i$};

\end{tikzpicture}

    \caption{Link type 1.}

    \label{f3}
\end{figure}
        \item[2.] Otherwise, let $j=\max\{\,1< \ell<i:\ \text{direction of square }\ell \neq\text{direction of square }i\,\}$,  whenever this maximum exists.
        \begin{itemize}
            \item If this maximum does not exist, then \emph{Link type 2} ($L_{i+1}=2$); see Fig.~\ref{f4} $a)$.
            \item If the maximum exists  and the \emph{orientation} of the $(i{+}1)$-th square matches that of the $j$-th square, then \emph{Link type 2} ($L_{i+1}=2$); see Fig.~\ref{f4} $b)$.
            \item Otherwise, \emph{Link type 3} ($L_{i+1}=3$); see Fig.~\ref{f4} $c)$.
            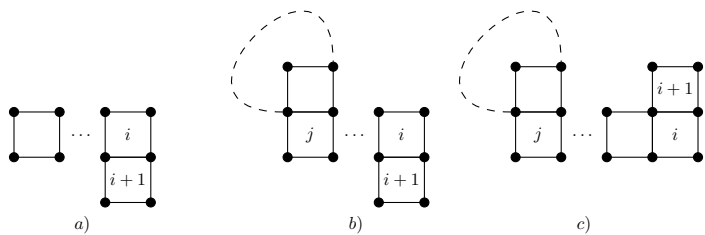
\begin{figure}[h!]
 \centering
\begin{tikzpicture}[scale=0.6, transform shape]

  \clip (-3,-2) rectangle (16,5);

\draw (-1,0) rectangle (0,1);

\draw (1,0) rectangle (2,1);

\draw (1,-1) rectangle (2,0);

\node[fill=black, draw, circle, inner sep=2pt] (n1) at (-1,0) {};
\node[fill=black, draw, circle, inner sep=2pt] (n2) at (0,0) {};
\node[fill=black, draw, circle, inner sep=2pt] (n3) at (-1,1) {};
\node[fill=black, draw, circle, inner sep=2pt] (n4) at (0,1) {};
\node[fill=black, draw, circle, inner sep=2pt] (n5) at (1,0) {};
\node[fill=black, draw, circle, inner sep=2pt] (n6) at (2,0) {};
\node[fill=black, draw, circle, inner sep=2pt] (n7) at (1,1) {};
\node[fill=black, draw, circle, inner sep=2pt] (n8) at (2,1) {};

\node[fill=black, draw, circle, inner sep=2pt] (n10) at (1,-1) {};
\node[fill=black, draw, circle, inner sep=2pt] (n11) at (2,-1) {};

\draw (5,0) rectangle (6,1);
\draw (5,1) rectangle (6,2);
\draw (7,0) rectangle (8,1);
\draw (7,-1) rectangle (8,0);

\node[fill=black, draw, circle, inner sep=2pt] (n1) at (5,0) {};
\node[fill=black, draw, circle, inner sep=2pt] (n2) at (6,0) {};
\node[fill=black, draw, circle, inner sep=2pt] (n3) at (5,1) {};
\node[fill=black, draw, circle, inner sep=2pt] (n4) at (6,1) {};
\node[fill=black, draw, circle, inner sep=2pt] (n5) at (7,0) {};
\node[fill=black, draw, circle, inner sep=2pt] (n6) at (8,0) {};
\node[fill=black, draw, circle, inner sep=2pt] (n7) at (7,1) {};
\node[fill=black, draw, circle, inner sep=2pt] (n8) at (8,1) {};
\node[fill=black, draw, circle, inner sep=2pt] (n9) at (5,2) {};
\node[fill=black, draw, circle, inner sep=2pt] (n10) at (6,2) {};
\node[fill=black, draw, circle, inner sep=2pt] (n11) at (7,-1) {};
\node[fill=black, draw, circle, inner sep=2pt] (n12) at (8,-1) {};
  \draw[ dashed] (n3) to  [out=180, in=90,looseness=5] (n10);

  \draw (10,0) rectangle (11,1);
\draw (10,1) rectangle (11,2);
\draw (12,0) rectangle (13,1);
\draw (13,0) rectangle (14,1);
\draw (13,1) rectangle (14,2);

\node[fill=black, draw, circle, inner sep=2pt] (n1) at (10,0) {};
\node[fill=black, draw, circle, inner sep=2pt] (n2) at (11,0) {};
\node[fill=black, draw, circle, inner sep=2pt] (n3) at (10,1) {};
\node[fill=black, draw, circle, inner sep=2pt] (n4) at (11,1) {};
\node[fill=black, draw, circle, inner sep=2pt] (n5) at (12,0) {};
\node[fill=black, draw, circle, inner sep=2pt] (n6) at (13,0) {};
\node[fill=black, draw, circle, inner sep=2pt] (n7) at (12,1) {};
\node[fill=black, draw, circle, inner sep=2pt] (n8) at (13,1) {};
\node[fill=black, draw, circle, inner sep=2pt] (n9) at (10,2) {};
\node[fill=black, draw, circle, inner sep=2pt] (n10) at (11,2) {};
\node[fill=black, draw, circle, inner sep=2pt] (n11) at (14,0) {};
\node[fill=black, draw, circle, inner sep=2pt] (n12) at (14,1) {};
\node[fill=black, draw, circle, inner sep=2pt] (n13) at (13,2) {};
\node[fill=black, draw, circle, inner sep=2pt] (n14) at (14,2) {};
  \draw[ dashed] (n3) to  [out=180, in=90,looseness=5] (n10);

\node at (0.5,-1.5) {\small $a)$};

\node at (1.5,-0.5) {\small $i+1$};
\node at (1.5,0.5) {\small $i$};
\node at (0.5,0.5) {\small $\dots$};
\node at (6.5,-1.5) {\small $b)$};
\node at (6.5,0.5) {\small $\dots$};
\node at (7.5,-0.5) {\small $i+1$};
\node at (7.5,0.5) {\small $i$};
\node at (5.5,0.5) {\small $j$};
\node at (11.5,-1.5) {\small $c)$};
\node at (11.5,0.5) {\small $\dots$};
\node at (13.5,1.5) {\small $i+1$};
\node at (13.5,0.5) {\small $i$};
\node at (10.5,0.5) {\small $j$};
\end{tikzpicture}
   \caption{Link types 2 and 3.}
    \label{f4}
\end{figure}
        \end{itemize}
    \end{enumerate}

    \item[c)] 
    Adopting the convention $L_1=L_2=1$, any general polyomino chain determines a sequence of the form
    $$
      (1,1,L_3,L_4,\dots,L_n),\qquad L_k\in\{1,2,3\},
    $$
    and we write the general polyomino chain as $PC(1,1,L_3,\dots,L_n)$. We call $(1,1,L_3,\dots,L_n)$ with $L_k\in\{1,2,3\}$, a \emph{sequence of links}.
\end{enumerate}
\end{defn}

\begin{defn}\label{admitidos}
Let $(1,1,L_3,\dots,L_n)$ be a sequence of links, where $L_k\in\{1,2,3\}$ for  $k\geq 3$.
Each $L_k$ is interpreted according to the placement rules for link types~1--3
(see Definition~\ref{orientacion}).
The sequence is said to be \emph{globally realizable} if the resulting construction
forms a general polyomino chain.
In this case, we denote the corresponding chain by $ PC(1,1,L_3,\dots,L_n)$. For brevity, we also refer to globally realizable sequences as \emph{valid}.

\end{defn}

\begin{rem} Note that, unlike the restricted setting with only link types $1$ and $2$, not every sequence of links is globally realizable; for example, $(1,1,1,2,1,1,3,1,1,3,1)$.
\end{rem}

In a general polyomino chain, for $ i \geq 1 $, by the definition of the links, the $ (i+1) $-th square is added to the underlying graph by attaching it to the $i$-th square along one of its sides, as specified by the $ (i+1) $-th link. Since this merging process involves only one side of the $ (i+1) $-th square, two of its vertices remain unused in the connection. We refer to these vertices as the \emph{ending vertices} of the $ (i+1) $-th square. Motivated by this,  we now state a simple lemma which will be relevant for the proof of the main theorem in Section \ref{s4}.

\begin{lem}\label{l0}
Let $(1,1,L_3,\dots,L_n)$ be a sequence of links, where $L_k\in\{1,2,3\}$ for each $k\geq 3$.
If, at each construction step, the two ending vertices of the most recently added square have degree $2$, then the sequence is globally realizable.
\end{lem}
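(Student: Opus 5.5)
We argue by induction on the number of squares: the construction up to step $k$ is a general polyomino chain $PC(1,1,L_3,\dots,L_k)$, i.e.\ a polyomino system whose inner dual graph is the path $1-2-\cdots-k$. The base cases $k=1,2$ hold trivially; $PC_2$ is two squares sharing an edge. For the inductive step, assume the first $k$ squares form a general polyomino chain $G_k$. The link $L_{k+1}$ places square $k+1$ on a prescribed side of square $k$. We must show three things: the new cell does not overlap an existing cell; the resulting graph $G_{k+1}$ is still a 2-connected plane graph whose interior faces are exactly the $k+1$ unit squares; and square $k+1$ is adjacent in the inner dual only to square $k$. Then the inner dual is the path $1-\cdots-(k+1)$.

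The key step is to translate the hypothesis on degrees into a geometric statement. Let $a,b$ be the two ending vertices of square $k+1$, i.e.\ its corners not lying on the shared side with square $k$. In $G_{k+1}$ the edges $a$--(shared corner), $b$--(shared corner) and $ab$ already give each of $a,b$ degree $2$. The hypothesis that both have degree exactly $2$ therefore means the following. Neither $a$ nor $b$ coincides with a vertex of $G_k$; otherwise that vertex would gain at least two more incident edges from the previous squares. Also, no lattice edge at $a$ or $b$ other than the square's own edges is present. From this we derive the consequences one by one. If square $k+1$ occupied the same unit cell as some earlier square $j$, all four corners would be old vertices, contradicting the condition at $a$. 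If square $k+1$ shared an edge with some square $j\neq k$, that edge would be either the side $ab$ or a side containing $a$ or $b$; in every case $a$ or $b$ is an old vertex, again a contradiction. The same argument excludes a corner-only contact at $a$ or $b$. The only other corners, the two shared with square $k$, may touch other cells only if they did so already in $G_k$.

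It remains to check that $G_{k+1}$ is a polyomino system. It is $G_k$ plus a path $u\,a\,b\,v$ attached at the two endpoints $u,v$ of an edge of $G_k$ lying on its outer face. This edge is on the outer face because square $k$ had a free side there: no cell occupies the new position. Adding an ear to a 2-connected plane graph preserves 2-connectivity and planarity. The new bounded face is exactly the unit square $uabv$, and no other bounded face is created, since $a$ and $b$ have degree $2$. The outer face still contains $a$ and $b$, so the next step again attaches to the boundary.

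The main obstacle I expect is making rigorous the step "no cell created with an edge-share other than square $k$", and excluding enclosed holes, i.e.\ interior faces that are not unit squares. I would handle this through the ear-decomposition viewpoint above rather than a case analysis over link types: every interior face of $G_{k+1}$ is either an interior face of $G_k$ or the new square. This avoids having to use the specific semantics of link types 1--3, which matter only for determining where the square goes, not for validity.
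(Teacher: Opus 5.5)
Your overall strategy is the paper's: induct on the number of squares and use the degree-$2$ ending vertices to show that appending square $k+1$ creates exactly one new cell. The paper argues this with a short cycle argument. A simple cycle through the degree-$2$ vertex $v_2$ must contain $v_1,v_2,v_3,v_4$, so the only cell meeting $v_2$ or $v_3$ is the new square. The paper then just asserts that ``all remaining conditions are satisfied''. Your route is different: you first show $a,b$ are fresh lattice points, then view $G_{k+1}$ as $G_k$ plus the ear $u\,a\,b\,v$ drawn in the outer face. This gives more. It yields 2-connectivity and the face structure, and it shows square $k+1$ shares an edge only with square $k$, so the inner dual stays a path. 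The paper leaves all of these implicit.

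One step is wrong as stated, and your closing claim that the link semantics are irrelevant for validity is not quite right. You say an old vertex at $a$ ``would gain at least two more incident edges''. This fails when the cell of square $k+1$ is already occupied by an earlier square $j$: every edge of square $k+1$ then already exists, and $\deg a$ is just its degree in $G_k$, which can be $2$. So your exclusion of overlap (``all four corners would be old vertices, contradicting the condition at $a$'') is circular.

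Overlap means square $j$ shares the side $uv$ with square $k$, so $j=k-1$ by the path structure; that is, the chain doubles back. For $k\ge 3$ the hypothesis still excludes this, but for a different reason and possibly at $b$ rather than $a$: square $k-1$ is glued to square $k-2$ along a side containing $a$ or $b$, so one of them has degree $\ge 3$. For $k=2$, however, placing square $3$ back onto square $1$ leaves both ending vertices with degree $2$ and produces no $3$-square chain. That case must be excluded by the link semantics: no link type places square $k+1$ on the side of square $k$ already glued to square $k-1$. The paper's proof uses this silently too (``by definition, the attached square is a cell'').

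Once overlap is excluded this way, your argument goes through. Replace ``at least two more edges'' by ``at least one edge at $a$ other than $au$ and $ab$''. Each of the three other cells around $a$ contributes such an edge, so $\deg a\ge 3$.
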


\begin{proof}
    We argue by induction on $i$, the step of the constructive process.
    
 For \(i = 1\), the graph consisting of a single square is clearly a general polyomino chain.
Assume now that for some \(i \geq 1\), the graph formed by the first \(i\) squares is a general polyomino chain, and that the \((i+1)\)-th square is attached to this structure by identifying one of its edges with an edge of the \(i\)-th square, in such a way that its two ending vertices have degree exactly two.

To introduce notation, label the vertices of the \((i+1)\)-th square by \(v_1, v_2, v_3,\) and \(v_4\), where \(v_2\) and \(v_3\) denote its terminal vertices (see Figure~\ref{f0}).

    \begin{figure}[h!]

 \centering
\begin{tikzpicture}[scale=0.6, transform shape]


\draw (0,0) rectangle (1,1);

\node[fill=black, draw, circle, inner sep=2pt] (n1) at (0,0) {};
\node[fill=black, draw, circle, inner sep=2pt] (n2) at (1,0) {};
\node[fill=black, draw, circle, inner sep=2pt] (n3) at (0,1) {};
\node[fill=black, draw, circle, inner sep=2pt] (n4) at (1,1) {};

\node at (-0.3,-0.3) { $v_4$};
\node at (-0.3,1.3) { $v_1$};
\node at (1.3,1.3) { $v_2$};
\node at (1.3,-0.3) { $v_3$};

\end{tikzpicture}

    \caption{Labeled $(i+1)$-th square.}

    \label{f0}
\end{figure}
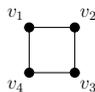

By definition, the attached $(i+1)$-th square is a cell. 
We verify that this attachment does not create any additional cells.
To this end, suppose without loss of generality that there exists a simple cycle $\mathcal{C}$ containing the vertex $v_2$. Since every vertex on a simple cycle has degree at least two and $v_2$ is adjacent only to 
$v_1$ and $v_3$, it follows that $v_1, v_3 \in \mathcal{C}$. 
Similarly, because $v_3 \in \mathcal{C}$, we must also have $v_4 \in \mathcal{C}$. Hence, any simple cycle containing either $v_2$ or $v_3$ necessarily contains the cycle $ v_1 v_2 v_3 v_4 v_1.$ Therefore, the only cell involving either $v_2$ or $v_3$ is the $(i+1)$-th square. 
Since all remaining conditions are satisfied, an inductive argument shows that the graph induced by $(1,1,L_3,\dots,L_n)$ is a general polyomino chain.

\end{proof}

A \emph{linear chain}, denoted by $Li_n$, corresponds to the case where all links are of type 1. Conversely, a \emph{zig-zag chain}, denoted by $Z_n$, is formed when all links are of type 2. In this framework, a square is called \emph{terminal} if it has exactly one adjacent square, \emph{medial} if it has two adjacent squares and contains no vertex of degree 2; and a \emph{kink} if it has two adjacent squares and contains a vertex of degree 2. A \emph{segment} is defined as a maximal linear chain together with an adjacent kink or terminal square. The length of a segment is defined as  its number of squares.

\section{General Polyomino Chains, Links and Actions} \label{s3}  

To further clarify the differences arising in the general polyomino chain setting, we begin with Lemma~2.1 of \cite{a0}, which establishes a recursive formula for the index $TI_f$ of a polyomino chain in terms of its link sequence. The argument underlying this result extends to general polyomino chains without any essential modification. We therefore state the corresponding generalized version below.
\newpage

\begin{lem}\label{l1}
Let $ TI_f $ be a degree-based index, and let $(1,1,L_3, \dots, L_n)$ be a valid sequence of links with $ n \geq 4 $. Then
$$
TI_f\big(PC(1,1,L_3, \dots, L_n)\big) = TI_f\big(PC(1,1,L_3, \dots, L_{n-1})\big) + g_f(L_{n-2},L_{n-1}, L_n)
,$$ where 
\[
g_f(L_{n-2},L_{n-1},L_n)=\]
\[
\begin{cases}
3\,f(3,3), 
& (L_{n-2},L_{n-1},L_n)=(i,1,1),\\[2pt]
3f(3,4)+f(2,4)+f(2,3)-2f(3,3), 
& (L_{n-2},L_{n-1},L_n)=(i,1,2)\ \text{or}\ (1,1,3),\\[2pt]
f(3,4)-f(2,4)+f(2,3)+2f(3,3), 
& (L_{n-2},L_{n-1},L_n)=(i,2,1)\ \text{or}\ (1,3,1),\\[2pt]
f(4,4)+2f(2,4), 
& (L_{n-2},L_{n-1},L_n)=(i,2,2)\ \text{or}\ (1,3,2),\\[2pt]
f(2,3)+f(2,4)+f(3,4)+f(4,4)-f(3,3), 
& (L_{n-2},L_{n-1},L_n)= (2,1,3)\ \text{or}\ (3,1,3),
\end{cases}
\]
with $i\in\{1,2,3\}$.
\end{lem}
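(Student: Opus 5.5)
The plan is a purely local computation. Passing from $PC(1,1,L_3,\dots,L_{n-1})$ to $PC(1,1,L_3,\dots,L_n)$ glues the $n$-th square to the $(n-1)$-th square along one edge $ab$. Because the sequence is valid, the inner dual of the larger chain is still a path. Hence the $n$-th square shares an edge only with the $(n-1)$-th square, and its two ending vertices $c,d$ are new vertices of degree $2$; this is the situation of Lemma~\ref{l0}. So the only structural changes are:
\begin{itemize}
\item the new edges $ac$ (or $bc$), $cd$ and $bd$ appear;
\item the degrees of $a$ and $b$ each increase by exactly one.
\end{itemize}
Therefore
$$TI_f(\text{new})-TI_f(\text{old})$$
equals the contribution of the three new edges, plus, for every old edge incident to $a$ or $b$, the difference $f(\text{new degrees})-f(\text{old degrees})$. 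All other terms of \eqref{TI} cancel.

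Next I will show that the degrees of $a$, $b$ and of all their old neighbours are determined by $(L_{n-2},L_{n-1},L_n)$. These neighbours lie in squares $n-1$ and $n-2$, so their degrees depend only on how squares $n-2$, $n-1$ and $n$ are attached. If $L_n=1$, the edge $ab$ is the side of square $n-1$ opposite to its junction with square $n-2$. Then $a,b$ are the two ending vertices of square $n-1$, both of degree $2$, and they become degree $3$. If $L_n\in\{2,3\}$, the edge $ab$ is a lateral side of square $n-1$. One endpoint is an ending vertex of square $n-1$ (degree $2\to3$). The other endpoint is shared with square $n-2$ (degree $3\to4$, or $2\to 3$ in the kink configuration). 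The old neighbours of $a,b$ are vertices of squares $n-1$ and $n-2$ whose degrees ($2$, $3$ or $4$) are read off from $L_{n-1}$ and from whether square $n-2$ is a kink, i.e.\ from $L_{n-2}$. I will draw each configuration, label the degrees before and after, and sum the changes. For instance, in case $(i,1,1)$ the two old edges at $a,b$ go from $f(2,3)$ or $f(2,2)$ type values to $f(3,3)$; together with the three new edges this telescopes to $3f(3,3)$ once the previous step's pattern is used. The other rows are obtained the same way.

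The remaining point is to justify the groupings in the statement, which I expect to be the main obstacle. I must explain why $(1,1,3)$ behaves like $(i,1,2)$, why $(1,3,\cdot)$ behaves like $(i,2,\cdot)$, and why $(2,1,3)$, $(3,1,3)$ form a separate case. The type-2/type-3 distinction is a global orientation label, not a local geometric one. When $L_{n-2}=1$ (no recent turn), a type-3 turn is locally congruent (by reflection) to a type-2 turn, so the degree pattern is identical. When the previous turn is close, a type-3 turn after a single straight square produces a U-shaped configuration. In that configuration the shared vertex of square $n-2$ already has a different degree from the one in the type-2 case, which yields the last formula. I would verify this by explicitly drawing the three-square windows up to symmetry. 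I would also check that the remaining triples (e.g.\ $(2,3,\cdot)$, $(3,3,\cdot)$) either reduce to listed patterns by reflection or force a violation of validity, so that they need not appear. Finally, the hypothesis $n\ge4$ guarantees that squares $n-2$ and $n-1$ exist with a defined link, so the window $(L_{n-2},L_{n-1},L_n)$ makes sense.
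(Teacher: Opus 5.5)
Your strategy is the right one and matches the paper's. The paper gives no separate proof; it only remarks that the local computation behind Lemma~2.1 of \cite{a0} carries over. Your decomposition (three new edges plus the changes on the old edges at $a,b$), the reflection argument for $(1,1,3)$ and $(1,3,\cdot)$, and the U-turn explanation of $(2,1,3),(3,1,3)$ are that computation. Two points of your sketch still need correcting.

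\textbf{The degree claim ``$2\to3$ in the kink configuration'' is false.} When $L_n\in\{2,3\}$, the endpoint $a$ of $ab$ on the junction with square $n-2$ lies in squares $n-2$ and $n-1$, so it has degree $3$. It cannot also lie in square $n-3$, since squares $n-3,\dots,n$ would then fill a $2\times2$ block; this is exactly what the excluded pairs $(2,3),(3,3)$ encode.

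So in every such case $a:3\to4$ and $b:2\to3$, and the other neighbour of $b$ has degree $2$. The increment is $f(2,4)+f(2,3)+f(3,4)+\sum_x\bigl(f(4,x)-f(3,x)\bigr)$, summed over the degrees $x$ of the two remaining neighbours of $a$. These degrees are:
\begin{itemize}
\item $\{3,3\}$ for $(i,1,2),(1,1,3)$, giving row 2;
\item $\{3,4\}$ for $(2,1,3),(3,1,3)$, giving row 5;
\item $\{2,4\}$ for $(i,2,2),(1,3,2)$, giving row 4.
\end{itemize}
In row 5 the degree-$4$ vertex is the inner corner of the kink at square $n-3$. Note that $L_{n-2}$ records whether square $n-3$, not $n-2$, is a kink.

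For $L_n=1$ there are three old edges at $a,b$, not two. The two junction vertices of squares $n-2,n-1$ have degrees $\{3,3\}$ if $L_{n-1}=1$ and $\{3,4\}$ otherwise, giving rows 1 and 3. With $a:2\to3$ in any case you would not recover the stated formulas.

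\textbf{The locality step is not justified as written.} The inference ``the inner dual is a path, hence $c,d$ are new vertices of degree $2$'' does not follow, and Lemma~\ref{l0} goes in the opposite direction.

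In a general chain, unlike the restricted one, a square could a priori meet an earlier non-consecutive square diagonally at a corner $c$, with the other two cells at $c$ empty. This is excluded because the squares between them, together with $c$, enclose a bounded face containing one of those empty cells. That face is either not a unit square or an extra cell adjacent to both squares, and either way validity fails.

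Applying this to every prefix (prefixes of valid chains are valid) shows that each vertex lies in one square, in two consecutive squares, or in three consecutive squares forming an L. Only this justifies your claim that the degrees of $a$, $b$ and their neighbours are determined by $(L_{n-2},L_{n-1},L_n)$. It is the one place where the general setting needs more than the restricted argument.
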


The recurrence relation in Lemma~\ref{l1} shows that the contribution of each newly added square to $TI_f$ depends only on the last three links (and at least on the last two). This locality underlies the dynamic programming approach used below.

However, it is worth emphasizing that the most significant application of Lemma 2.1 to polyomino chains in \cite{a0}, arose from its use in reverse. Specifically, one could begin with an arbitrary sequence of restricted links and affirm that the corresponding value, computed via Lemma 2.1, coincides with that of the topological index $TI_f$ associated with the polyomino chain represented by the sequence. This was possible because sequences of restricted links and polyomino chains were in one-to-one correspondence. In contrast, this approach is not valid for general polyomino chains. Thus, to extend the reverse application to sequences of links and to assign them a degree-based index value in the sense of Lemma~\ref{l1}, we introduce sequences of links that, at least locally, exhibit the structural behavior of general polyomino chains. Specifically, the graph associated with any subsequence of three consecutive links corresponds to a general polyomino chain.

\begin{defn}
 A sequence of links $(1,1,L_3,\dots,L_n)$ is \emph{locally realizable} if it contains no consecutive pair $(2,3)$ or $(3,3)$.
\end{defn}

\begin{defn}\label{actions}
For $n\ge4$, triads of consecutive links are grouped according to their contribution in Lemma~\ref{l1} into the following \emph{actions}:
\[
\begin{array}{ll}
\mathrm{SS}: & (i,1,1),\\
\mathrm{SC}: & (i,1,2)\ \text{or}\ (1,1,3),\\
\mathrm{CS}: & (i,2,1)\ \text{or}\ (1,3,1),\\
\mathrm{CC}: & (i,2,2)\ \text{or}\ (1,3,2),\\
\mathrm{TT}: & (2,1,3)\ \text{or}\ (3,1,3),
\end{array}
\qquad i\in\{1,2,3\}.
\]\end{defn}

\begin{rem}
    Informally, the letter $\mathrm{S}$ stands for \emph{same direction}, $\mathrm{C}$  for \emph{change direction}, and $\mathrm{TT}$ refers to a \emph{tight turn}. Therefore, an action such as $SC$ intends to symbolize a transition from being in a same direction and then changing it (see Figs.~\ref{f3}--\ref{f4}).

\end{rem}

Given a locally realizable sequence of links $(1,1,L_3,\dots,L_n)$, we associate to each consecutive triple $(L_i,L_{i+1},L_{i+2})$ an action $a_i \in \{SS, SC, CS, CC, TT\}$, as defined in Definition~\ref{actions}. In this way, we obtain a sequence $(a_1,\dots,a_{n-2})$, which is required to satisfy the following compatibility rules:
\[
a_1\in\{SS,SC\}~
\text{and for }1\le i\le n-3:
\begin{cases}
\text{if $a_i$ ends with $S$, then } a_{i+1}\in\{SS,SC,TT\},\\
\text{if $a_i$ ends with $C$, then } a_{i+1}\in\{CS,CC\},\\
\text{if $a_i=TT$, then $a_{i-1}=CS$ and  $a_{i+1}\in\{CS,CC\}$}.
\end{cases}
\]
 
\begin{defn}
    Given $n\geq 3$, we refer to a \emph{sequence of actions} as a vector of the form $(a_1, \dots, a_{n-2})$, where $a_i \in \{SS, SC, CS, CC, TT\}$ for all $i \in \{1, \dots, n-2\}$ and the sequence satisfies the constraints established above. Moreover, if there exists a realizable \emph{sequence of links} associated to the \emph{sequence of actions}, we will say the \emph{sequence of actions} is realizable.
\end{defn}
\begin{prop}\label{action-to-links}
If $(a_1,\dots,a_{n-2})$ is a sequence of actions, then any sequence of links obtained through reverse translation is locally realizable.
\end{prop}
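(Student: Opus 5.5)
The plan is to work directly from the table in Definition~\ref{actions}. A reverse translation of $(a_1,\dots,a_{n-2})$ is a link sequence $(L_1,\dots,L_n)$ in which, for each $i$, the triple $(L_i,L_{i+1},L_{i+2})$ is one of the triples listed for $a_i$, and overlapping triples agree on the links they share. I will take this as the meaning of ``obtained through reverse translation'' and fix an arbitrary such sequence. To prove local realizability I have to rule out a consecutive pair $(L_{k-1},L_k)$ with $L_k=3$ and $L_{k-1}\in\{2,3\}$. So the whole argument is about where a link of type $3$ can occur and what must stand immediately before it.

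First, positions $1$ and $2$ are harmless. The convention $L_1=L_2=1$ is part of every sequence of links. It is also consistent with the compatibility rule $a_1\in\{SS,SC\}$, since every triple listed for $SS$ and $SC$ begins with $1,1$ when read at $i=1$: $(1,1,1)$, $(1,1,2)$ or $(1,1,3)$. Hence any $3$ occurs at some position $k\ge 3$. For such $k$ the action $a_{k-2}$ exists, because $1\le k-2\le n-2$, and it covers exactly the triple $(L_{k-2},L_{k-1},L_k)$.

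The key step is to read off from Definition~\ref{actions} which admissible triples have third entry equal to $3$. Scanning the table:
\begin{itemize}
\item $SS$, $CS$ and $CC$ end in $1$, $1$ and $2$ respectively, so none of them qualifies.
\item $SC$ qualifies only through $(1,1,3)$.
\item $TT$ qualifies through $(2,1,3)$ and $(3,1,3)$.
\end{itemize}
In every case the middle entry is $1$. Therefore $L_k=3$ forces $L_{k-1}=1$, so neither $(2,3)$ nor $(3,3)$ can appear consecutively, and the sequence is locally realizable.

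The only point needing care, and the one I expect to be the main obstacle, is the interpretation of reverse translation. The argument uses only the single triple attached to $a_{k-2}$, so it does not depend on how overlapping triples are reconciled. Nor does it use the compatibility rules beyond guaranteeing $L_1=L_2=1$. It just needs each window of three links to be one of the listed representatives of its action. I would state this interpretation explicitly at the start of the proof. I would also note that the conclusion holds even when the action sequence admits several consistent link sequences: each action has several representatives, for instance $i\in\{1,2,3\}$ in $(i,1,1)$, and the argument applies to each of them.
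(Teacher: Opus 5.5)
Your proof is correct and complete. The paper states Proposition~\ref{action-to-links} without proof, so there is no official argument to match.

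The nearest analogue is Lemma~\ref{l3}~b). It proves local realizability only for the specific link word produced by Algorithm~\ref{A2}, and it goes through the compatibility rules: a link $3$ arises only from a $TT$, every $TT$ is preceded by $CS$, so the preceding link is $1$.

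Your route bypasses the compatibility rules and works purely from the table in Definition~\ref{actions}. The only admissible triples ending in $3$ are $(1,1,3)$, $(2,1,3)$ and $(3,1,3)$, all with middle entry $1$. The window $(L_{k-2},L_{k-1},L_k)$ governed by $a_{k-2}$ contains every potential offending pair.

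This is shorter, and it matches the quantifier in the statement (``any sequence of links obtained through reverse translation'') rather than treating a single translation. The paper's special case is also covered, because the output of Algorithm~\ref{A2} satisfies your window condition (compare the table in the proof of Lemma~\ref{l3}~c)). Making the meaning of ``reverse translation'' explicit, as you propose, is worthwhile, since the paper never defines it.

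One small simplification: you do not need the convention $L_1=L_2=1$ either. The triples with middle entry $3$, namely $(1,3,1)$ and $(1,3,2)$, both begin with $1$. So every listed triple is itself free of the pairs $(2,3)$ and $(3,3)$, and every consecutive pair of links lies inside some window.
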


\begin{rem}\label{action-nonunique}
The limitation that sequences of links need not be globally realizable passes to action sequences. Also, actions retain even less geometric information and, in general, do not determine a unique representation. Figure~\ref{f5} illustrates this with two non-isomorphic, non-general chains that admit different locally realizable sequences of links but induce the same  sequence of actions.
\end{rem}

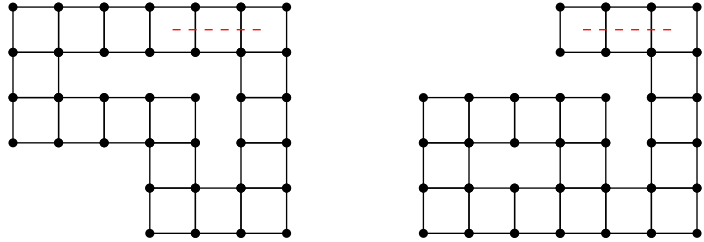
\begin{figure}[h!]
\centering
\begin{tikzpicture}[line width=0.5,scale=0.6, transform shape]

\def\squares{{0/0},{1/0},{2/0},{3/0},{4/0},
              {0/-1},{4/-1},
              {0/-2},{2/-2},{3/-2},{4/-2},
              {2/-3},{0/-3},
              {2/-4},{0/-4},
              {2/-5},{1/-5},{0/-5}}

\begin{scope}[rotate=270]
  \foreach \x/\y in \squares {
    \draw (\x,\y) rectangle (\x+1,\y+1);
  }
  \foreach \x/\y in \squares {
    \foreach \dx in {0,1} {
      \foreach \dy in {0,1} {
        \node[circle,fill=black,inner sep=2pt] at (\x+\dx,\y+\dy) {};
      }
    }
  }
\draw[red, dashed] (0.5,-1.5) -- (0.5,0.5);

\end{scope}

 \def\squares{{0/0},{1/0},{2/0},{3/0},{4/0},
              {0/-1},{4/-1},
              {0/-2},{2/-2},{3/-2},{4/-2},
              {2/-3},{4/-3},
              {2/-4},{4/-4},
              {2/-5},{3/-5},{4/-5}}

\begin{scope}[rotate=270, yshift=9cm]
  \foreach \x/\y in \squares {
    \draw (\x,\y) rectangle (\x+1,\y+1);
  }
  \foreach \x/\y in \squares {
    \foreach \dx in {0,1} {
      \foreach \dy in {0,1} {
        \node[circle,fill=black,inner sep=2pt] at (\x+\dx,\y+\dy) {};
      }
    }
  }

\draw[red, dashed] (0.5,-1.5) -- (0.5,0.5);

\end{scope}

\end{tikzpicture}

\caption{Sequences of locally realizable links for the left and right graphs are
 $(1,1,1,2,1,1,1,3,1,1,1,1,3,1,3,1,1,2)$ and $(1,1,1,2,1,1,1,3,1,3,1,2,1,1,3,1,3,1)$, respectively.
Both sequences correspond to the following sequence of actions $(SS, SC, CS, SS, SS, SC, CS, TT, CS, SC, CS, SS, SC, CS, TT, CS)$. The three initial squares are highlighted by a red dashed line.}
\label{f5}
\end{figure}

Extending the local computation of Lemma~\ref{l1} to actions, we set
\[
\begin{aligned}
&g_f(SS):=g_f(1,1,1),\quad &&g_f(SC):=g_f(1,1,2),\\
&g_f(CS):=g_f(1,2,1),\quad &&g_f(CC):=g_f(1,2,2),\\
&g_f(TT):=g_f(2,1,3),
\end{aligned}
\]
and for $n\ge4$ we have the recurrence
\begin{equation}\label{e1}
TI_f\big((a_1,\dots,a_{n-2})\big)
= TI_f\big((a_1,\dots,a_{n-3})\big) + g_f(a_{n-2}).
\end{equation}
 At this point, observe that the \emph{degree-based index value} assigned  to the \emph{sequence of actions}, according to Equation (\ref{e1}), does not necessarily 
coincide with the degree-based index value of the underlying graphs. Nevertheless, this \emph{degree-based index value} will suffice for our purposes, as will be demonstrated later.
 Henceforth, we use the term \emph{degree-based index value}  without distinction.
\begin{rem}
If a general polyomino chain $PC(1,1,L_3,\dots,L_n)$ contains no triad of the form $(2\text{ or }3,\,1,\\\,3)$, then replacing every link type~3 by type~2 yields a polyomino chain with the same degree-based index value (by Lemma~\ref{l1} and Lemma~2.1 in \cite{a0}). Hence, optimizing over that subfamily of general polyomino chains coincides with
optimizing over polyomino chains \cite{a0}.
\end{rem}

To further simplify the use of the action $TT$, we impose the restriction that
$TT$ follows $(SC,CS)$ or $(CC,CS)$ (excluding $(TT,CS)$), and we compress
\[
(\mathrm{SC},\mathrm{CS},\mathrm{TT})\mapsto \mathrm{ST},
\qquad
(\mathrm{CC},\mathrm{CS},\mathrm{TT})\mapsto \mathrm{CT}.
\]
The resulting \emph{sequence of actions} obtained via this compression is referred to as a \emph{sequence of compressed actions}. Its entries may now take values in the set $\{SS, SC, CS, CC, ST, CT\}$, while still satisfying all other previously defined constraints. 
Note that, the first entry of the \emph{sequence of compressed actions} may also be $ST$. Regarding the computation of the degree-based index, we naturally define:
\[
g_f(\mathrm{ST})=g_f(\mathrm{SC})+g_f(\mathrm{CS})+g_f(\mathrm{TT}),\qquad
g_f(\mathrm{CT})=g_f(\mathrm{CC})+g_f(\mathrm{CS})+g_f(\mathrm{TT}).
\]

\begin{rem}
   Observe that,  no valid sequence of links can contain the consecutive
subsequence $(2 \text{ or } 3,\, 1,\, 3,\, 1,\, 3),$ which corresponds to the action subsequence $(TT, CS, TT)$. Consequently,
optimizing over the set of all sequences of actions that do not
contain the subsequence $(TT, CS, TT)$, and mapping them to valid sequences of
links, is equivalent to optimizing over the entire set of valid link sequences.
\end{rem}

\section{Generating a General Polyomino Chain} \label{s4}

Based on the previous section, optimizing degree-based indices over general polyomino chains could be achieved by working with actions, as they encapsulate the essential information required for computing such indices. Therefore, we can utilize the formula of Equation (\ref{e1}) and exploit its recursive structure to apply a dynamic programming approach for identifying at least one extremal \emph{sequence of actions}. The main challenge of this approach lies in guaranteeing that the resulting extremal \emph{sequence of actions} actually corresponds to a general polyomino chain, the object of interest, in other words, that it is valid. 

\medskip
\noindent\textbf{Notation.}
Let $(a_1,\dots,a_N)$ be a sequence of \emph{compressed actions} with
$a_i\in\{SS,SC,CS,CC,ST,CT\}$. We define the cumulative number of direction changes
up to and including $a_i$ by
\[
m_0=0,\qquad
m_i = m_{i-1}+\mathbb{I}_{\{a_i\in\{SC,CC\}\}}+2\mathbb{I}_{\{a_i\in\{ST,CT\}\}},
\quad i\ge1.
\]
Each action $a_i$ consists of two letters; we denote by
$F_i\in\{S,C\}$ its first letter and by
$S_i\in\{S,C,T\}$ its second letter.
These variables will be used in the construction and validation steps below.

\begin{algorithm}[htbp]\label{A1}
\caption{Parity correction for tight turns (compressed actions)}
\KwIn{Compressed action sequence $(a_1,\dots,a_N)$}
\KwOut{Modified compressed action sequence $(a_1,\dots,a_N)$}

$m \gets 0$\;
\For{$i=1$ \KwTo $N$}{
  \If{$a_i\in\{SC,CC\}$}{$m \gets m+1$}
  \If{$a_i\in\{ST,CT\}$}{$m \gets m+2$}
  \If{$S_i=T$ \textbf{and} $m$ is odd}{
    $j \gets \max\{1,\dots,i-1: S_j=C\}$\;
    $a_j \gets F_jT$;\quad $a_i \gets F_iC$\;
  }
}
\end{algorithm}

\begin{lem}\label{l2}
    Let $(a_1,\dots,a_N)$ be a \emph{sequence of compressed actions}. For every $i\in\{1,\dots,N\}$, after completing iteration $i$ of Algorithm~\ref{A1}, the following invariants hold:
    \begin{itemize}
        \item[a)] For every $j\le i$ with $S_j=T$, $m_j$ is even.
        \item[b)] The new sequence is indeed a \emph{sequence of compressed actions}.
        \item[c)] The value of $TI_f$ is unchanged.
    \end{itemize}
\end{lem}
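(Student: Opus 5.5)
We argue by induction on the iteration index $i$, keeping the three invariants together, because the swap performed at iteration $i$ can only be justified using what the earlier iterations guarantee. Throughout, $m$ denotes the running counter in Algorithm~\ref{A1}. The first thing to record is that a swap $a_j\gets F_jT$, $a_i\gets F_iC$ (with $S_j=C$, $S_i=T$) changes the increment contributed by $a_j$ from $1$ to $2$ and that of $a_i$ from $2$ to $1$. Hence $m_k$ rises by exactly one for $j\le k<i$ and is unchanged for $k<j$ and $k\ge i$. For $i=0$ (nothing processed) all three invariants hold trivially. For the inductive step we assume them after iteration $i-1$. If no swap occurs at iteration $i$, then (b) and (c) are inherited, and (a) holds because either $S_i\neq T$ or $m_i$ is even.

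Now suppose a swap occurs, i.e.\ $S_i=T$ and $m_i$ is odd. First we check that $j$ exists. Since $m_i$ is odd while every $T$-action adds $2$, some earlier action must have $S=C$; this follows by a parity count over $a_1,\dots,a_i$. Next we prove (a). We have $m_i$ unchanged, but now $S_i=C$, so index $i$ drops out of the condition. At $j$, the new value of $m_j$ is the old $m_j+1$, and the old $m_j$ equals $m_{i-1}$ plus the number of $C$-changes strictly between $j$ and $i$; by maximality of $j$ there are none. Those intermediate actions have $S\in\{S,T\}$, so each adds $0$ or $2$, and every intermediate $T$ had even $m$ by hypothesis. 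Thus the old $m_j\equiv m_{i-1}=m_i-2$, which is odd, and the new $m_j$ is even. For an intermediate $k$ with $S_k=T$ and $j<k<i$, the old $m_k$ was even and is now odd. We expect this to be the main obstacle, and we would rule it out using maximality of $j$. All intermediate $m_k$ have the same parity as $m_j^{\text{old}}$, which is odd, contradicting the fact that such a $k$ had even $m_k$ after its own iteration. So no intermediate $T$ exists, and (a) holds.

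For (b), we verify the compatibility rules. Changing $a_j$ from $F_jC$ to $F_jT$ and $a_i$ from $F_iT$ to $F_iC$ preserves first letters. Since, as just shown, $a_{j+1},\dots,a_{i-1}$ contain no $T$ and $C$, their second letters are $S$, and the rules for successors depend only on whether an action ends in $S$ or $C$. The delicate points are the successors of $a_j$ and $a_i$. After $C$ the next action must start with $C$, and after $T$ (compressed $ST/CT$ end with $TT$, which is followed by $CS/CC$) the next action must also start with $C$, so the two cases impose identical constraints. Likewise $a_j$'s predecessor constraint depends only on $F_j$. We would also check that the compressed forms $ST$ and $CT$ are allowed wherever $SC$ and $CC$ are, and note that the ban on $(TT,CS,TT)$ is not violated since no $T$ lies between $j$ and $i$.

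For (c), we use $g_f(F T)=g_f(FC)+g_f(CS)+g_f(TT)$ and $g_f(FC)$ for $F\in\{S,C\}$. The total contributed by positions $j$ and $i$ before the swap is $g_f(F_jC)+g_f(F_iC)+g_f(CS)+g_f(TT)$, and after the swap it is the same multiset of terms, so $TI_f$ computed by \eqref{e1} is unchanged.
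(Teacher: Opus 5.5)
Your proof is correct and follows the paper's argument step for step. It uses induction on the iteration, gets $j$ from parity, excludes any intermediate $T$ by maximality of $j$ plus the parity invariant (so the new $m_j$ is even), proves (b) by noting that successor and predecessor constraints depend only on the last or first letter, and proves (c) with $g_f(FT)=g_f(FC)+g_f(CS)+g_f(TT)$. One small fix: the contradiction for an intermediate $T$ should cite invariant (a) as it holds after iteration $i-1$, not ``after its own iteration'', since such an entry may have become a $T$ through an earlier swap.
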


\begin{proof}
We argue by induction on $i$.

 $a)$ For $i=1$, if $S_1=T$ then by definition $m_1=2$, hence even.  
Assume the claim holds up to $i$, and consider iteration $i+1$.
If $S_{i+1}\neq T$ or $m_{i+1}$ is even, Algorithm~\ref{A1} does nothing and the inductive hypothesis applies.

Suppose $S_{i+1}=T$ and $m_{i+1}$ is odd. Set
$$
\ell=\max\{j\in\{1,\dots,i\}:S_j=C\},
$$
which exists because by the definition of $m_{i+1}$, we have that $m_i$ must also be odd. By maximality of $\ell$ there is no index $j\in\{\ell{+}1,\dots,i\}$ with $S_j=T$.

Algorithm~\ref{A1} updates $a_\ell: F_\ell C\mapsto F_\ell T$ and
$a_{i+1}:F_{i+1}T\mapsto F_{i+1}C$. Since $m_k$ changes parity only when passing a $C$ (increments by $+1$) and not when passing a $T$ (increments by $+2$), we have:

\begin{itemize}
    \item Before the update, the parity of $m_\ell$ equals that of $m_{i+1}$, hence $m_\ell$ is odd.
    \item After the update, the only prefix up to $\ell$ whose number of $C$-endings changes is $\ell$ itself, decreasing by $1$; thus $m_\ell$ flips to be even.
\end{itemize}

All other $m_j$ with $j\le\ell-1$ are unchanged (which are even by the inductive hypothesis when $S_j=T$),
and there is no $j\in \{\ell+1,\dots,i\}$ with $S_j=T$; indeed, if such $j$ existed,
the parity of $m_j$ would coincide with that of $m_\ell$ (no $C$ between $\ell$ and $j$) prior to the update,
contradicting the inductive hypothesis ($m_j$ even).
Finally, $S_{i+1}$ becomes $C$, so there is nothing to check at $j=i+1$.
Hence, after iteration $i+1$, every $m_j$ with $S_j=T$ and $j\le i+1$ is even.

$b)$ Notice first that, since $(a_1, \dots, a_n)$ is a sequence of compressed actions, it follows that $a_1 \in \{SS, SC, ST\}$. Hence, regardless of the modifications performed by the algorithm, if $a_1$ is equal to $ST$ or $SS$, it is never changed. If $a_1 = SC$, it may be changed to $ST$, so in all cases the first action of the new compressed sequence still lies in $\{SS, SC, ST\}$. Moreover, since in the compressed notation we move actions $TT$ together with preceding actions $SC, CS$ or $CC, CS$, in the new sequence when uncompressed it holds these precede any action $TT$. It remains to show that consecutive actions are still compatible. But for this it is sufficient to prove inductively that when a change is performed, the action following $F_lT$ starts with $C$ and that the one, if it exists, following $F_{i+1}C$ starts with $C$; but this is immediate from the fact before the change the sequence was a sequence of compressed actions and we had $F_lC$ and $F_{i+1}T$.

$c)$ By the local recurrence of Equation \eqref{e1}, $TI_f$ is the sum of local contributions $g_f(\cdot)$. Only positions $\ell$ and $i+1$ change, so it suffices to verify
\[
g_f(F_\ell C)+g_f(F_{i+1}T)=g_f(F_\ell T)+g_f(F_{i+1}C),
\]
which follows from the identity $g_f(F\,T)=g_f(F\,C)+g_f(\mathrm{CS})+g_f(\mathrm{TT})$
and additivity in Equation \eqref{e1}. Therefore, the value of $TI_f$ is unchanged.
\end{proof}

Before proceeding, note that part~ $a)$ of Lemma~\ref{l2} implicitly guarantees that Algorithm~\ref{A1} is well defined. To reach our main goal in this section, we now need a bridge from the sequence of compressed actions produced by Algorithm~\ref{A1}
to a corresponding vector of links. The Algorithm \ref{A2} provides this translation.

\begin{algorithm}[htbp]\label{A2}
\caption{Actions-to-links translation}
\KwIn{Action sequence $(b_1,\dots,b_{n-2})$}
\KwOut{Link word $(1,1,L_3,\dots,L_n)$}

$L_1 \gets 1$;\quad $L_2 \gets 1$\;
\For{$i=1$ \KwTo $n-2$}{
  \If{$b_i = TT$}{$L_{i+2} \gets 3$}
  \ElseIf{$S_i = S$}{$L_{i+2} \gets 1$}
  \Else{$L_{i+2} \gets 2$}
}
\end{algorithm}

In what follows, we assume that the first two
squares of the polyomino system lie on a horizontal line.
\begin{lem}\label{l3}
Let
$(b_1,\dots,b_{n-2})$ be the (uncompressed) sequence of actions obtained from the
compressed sequence of actions $(a_1,\dots,a_N)$ of Algorithm~\ref{A1}, and let
$(1,1,L_3,\dots,L_n)$ be the link word produced by Algorithm~\ref{A2} with input $(b_1,\dots,b_{n-2})$. Then:
\begin{itemize}
  \item[a)] For every $i\ge 5$, if $L_i=3$ then necessarily $L_{i-1}=1$ and $L_{i-2}=2$,
  and the squares $i-1$ and $i$ are horizontally aligned. In particular, if $L_i=L_{i+1}=1$ for some $i\ge1$, then $L_{i+2}\in\{1,2\}$.
  \item[b)] $(1,1,L_3,\dots,L_n)$ is a (locally realizable) sequence of links.
  \item[c)] The degree-based index is preserved:
  $TI_f\big((1,1,L_3,\dots,L_n)\big)=TI_f\big((a_1,\dots,a_N)\big)$.
\end{itemize}
\end{lem}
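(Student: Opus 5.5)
I will argue directly from the structure of Algorithm~\ref{A2} together with the uncompression rules $\mathrm{ST}\mapsto(\mathrm{SC},\mathrm{CS},\mathrm{TT})$ and $\mathrm{CT}\mapsto(\mathrm{CC},\mathrm{CS},\mathrm{TT})$. The key bookkeeping is that Algorithm~\ref{A2} writes $L_{i+2}=3$ exactly when $b_i=\mathrm{TT}$, writes $L_{i+2}=1$ when the second letter of $b_i$ is $S$, and writes $L_{i+2}=2$ otherwise. Because every $\mathrm{TT}$ in the uncompressed sequence arises from a compressed $\mathrm{ST}$ or $\mathrm{CT}$, it is always preceded by $(\cdot C,\mathrm{CS})$. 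Hence $L_i=3$ forces $L_{i-1}=1$ (from the $\mathrm{CS}$) and $L_{i-2}=2$ (from the $\mathrm{SC}$ or $\mathrm{CC}$). This settles the first half of a). The ``in particular'' follows by contraposition: $L_{i+2}=3$ would require $L_{i}=2$.

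For the geometric part of a), I will use the parity invariant of Lemma~\ref{l2}~a). Square $k$ has horizontal direction iff the number of direction changes among links $L_3,\dots,L_k$ is even, since squares $1,2$ are horizontal and links of type 2 or 3 flip direction while type 1 keeps it. The index $m_j$ counts the $C$- and $T$-endings up to the compressed action $a_j$. In particular, the $T$ of an $\mathrm{ST}/\mathrm{CT}$ contributes the two changes coming from its $\cdot C$ and its $\mathrm{TT}$. So $m_j$ equals the number of direction changes up to the square placed by the final $\mathrm{TT}$, namely square $i$. Lemma~\ref{l2}~a) makes this even, so square $i$ is horizontal. Since $L_i=3$ is a change of direction, square $i-1$ is vertical. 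I therefore need to reread ``horizontally aligned'' as: the pair $(i-1,i)$ is attached horizontally, i.e.\ square $i$ lies left or right of $i-1$.

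The main obstacle is matching the index shift between $m_j$ (counted on compressed actions) and link positions (offset by $2$). I will check carefully on a small example, such as $(\mathrm{SS},\mathrm{ST},\ldots)$, that the parity really matches the direction of square $i$. The hypothesis $i\ge5$ comes from the fact that the earliest $\mathrm{TT}$ is $b_3$, giving $L_5$.

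For b), I must show that no consecutive pair $(2,3)$ or $(3,3)$ occurs and that the word starts $1,1$. The start is immediate from the algorithm. The forbidden pairs are excluded because every $3$ is preceded by a $1$, as just shown. This also agrees with Proposition~\ref{action-to-links}, since Lemma~\ref{l2}~b) guarantees that the input is a genuine sequence of actions.

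For c), I will check that each triple $(L_i,L_{i+1},L_{i+2})$ produced falls into the action class $b_i$ of Definition~\ref{actions}, using the compatibility rules: the first letter of $b_i$ equals the second letter of $b_{i-1}$. Then Lemma~\ref{l1} and the recurrence \eqref{e1} give the same summands term by term. The compression identities defining $g_f(\mathrm{ST})$ and $g_f(\mathrm{CT})$ then yield equality with the compressed value, and Lemma~\ref{l2}~c) shows this is the original value.
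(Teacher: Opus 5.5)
Your proposal is correct and follows essentially the paper's own proof. Every TT lies inside an uncompressed ST/CT block, which gives the $(2,1,3)$ pattern and hence b). Lemma~\ref{l2}~a) gives the parity for the geometric claim. Your reading of ``horizontally aligned'' is the intended one, and the index shift needs no example check: $b_k$ places square $k+2$, so $m_j$ equals the number of type-2/3 links among $L_3,\dots,L_i$. Part c) is the window-by-window matching plus Lemma~\ref{l2}~c). One detail to tighten in c): the rule ``first letter of $b_i$ equals the second letter of $b_{i-1}$'' is not literally true around TT. For windows $(L_i,3,L_{i+2})$, use that TT is preceded by CS to get $L_i=1$, which places them in the classes $(1,3,1)$ and $(1,3,2)$ of Definition~\ref{actions}.
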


\begin{proof}
$a)$ For $i\ge 5$, by Algorithm~\ref{A2} we have $L_i=3$ iff $b_{i-2}=TT$. In the uncompressed action sequence, $TT$ occurs only as the last element of $(SC,CS,TT)$ or $(CC,CS,TT)$; hence $b_{i-3}=CS$ and $S_{i-4}=C$, so $L_{i-2}=2$ and $L_{i-1}=1$. Moreover, the block $(b_{i-4},b_{i-3},b_{i-2})$ ends with a $T$, and by Lemma~\ref{l2} $a)$ the number of direction changes up to
and including $b_{i-2}$ is even. Since the first two squares are horizontal and each change toggles the direction, squares $i-1$ and $i$ are horizontally aligned. To conclude, suppose for contradiction that $L_i=L_{i+1}=1$ and $L_{i+2}=3$. By the argument above, the pattern $L_{i+2}=3$ would require
$(L_i,L_{i+1})=(2,1)$, a contradiction. Hence $L_{i+2}\in\{1,2\}$.

\smallskip
$b)$ The sequence $(1,1,L_3,\dots,L_n)$ is locally admissible. Indeed, if
$(L_k,L_{k+1})=(2,3)$ then $S_{k-2}=C$ and $b_{k-1}=TT$ by Algorithm~\ref{A2};
but every $TT$ is preceded by $CS$, which forces $S_{k-2}=S$, a contradiction.
If $(L_k,L_{k+1})=(3,3)$ then $b_{k-2}=TT$ and $b_{k-1}=TT$, which is impossible
since $TT$ must be preceded by $CS$; thus consecutive $TT$ cannot occur.

\smallskip
$c)$ By Lemma~\ref{l2} $c)$, Algorithm~\ref{A1} preserves $TI_f$ on compressed actions. It remains to check that the action sequence induced from $(1,1,L_3,\dots,L_n)$ by length-three windows, say $(c_1,\dots,c_{n-2})$, agrees with the (uncompressed) input $(b_1,\dots,b_{n-2})$ to Algorithm~\ref{A2}, hence that the degree-based index computed on links matches the one on actions.

For $i=1$, we have $c_1=SS$ exactly when $L_3=1$ and $c_1=SC$ exactly when $L_3=2$,
so $c_1=b_1$. For $i\ge2$ the determination is uniform:
\[
L_{i+2}=
\begin{cases}
1 & \text{if } S_i=S,\\
2 & \text{if } S_i=C,\\
3 & \text{if } b_i=TT,
\end{cases}
\qquad
L_{i+1} =
\begin{cases}
1, & \text{if } S_{i-1}=S,\\
2, & \text{if } S_{i-1}=C,\\
3, & \text{if } S_{i-1}=T \ \ (\text{equivalently } b_{i-1}=TT).
\end{cases}
\]
and, by part $a)$, whenever $L_{i+2}=3$ one has $(L_i,L_{i+1})=(2,1)$. Therefore, the induced triad $(L_i,L_{i+1},L_{i+2})$ encodes:
\[
\begin{array}{|c|c|}
\hline
\text{pattern of } (S_{i-1},S_i)\text{ and } b_i & c_i \\ \hline
(S,S)\text{ with } b_i\neq TT & SS\\
(S,C) & SC\\
(C\text{ or }T,\,S) & CS\\
(C\text{ or }T,\,C) & CC\\
\text{any with }b_i=TT & TT\\
\hline
\end{array}
\]
which coincides with $b_i$ in all cases. Hence
$TI_f\big((1,1,L_3,\dots,L_n)\big)=TI_f\big((b_1,\dots,b_{n-2})\big)$, and the latter equals the value for the compressed output of Algorithm~\ref{A1} by Lemma~\ref{l2} $c)$.
\end{proof}

Since the variable $m_i$ was central for Algorithm~\ref{A1}, it is natural to introduce its analogue on the link side. Given the \emph{sequence of links} $(1,1,L_3,\dots,L_n)$  obtained by applying Algorithms \ref{A1} and \ref{A2}, define for $i\in\{1,\dots,n\}$
$$
M_0:=0,\qquad
M_i:=M_{i-1}+\mathbf{1}_{\{L_i\in\{2,3\}\}}.
$$
Thus $M_i$ counts the number of direction changes up to the $i$-th square.

\begin{cor}\label{c2}
Let $(1,1,L_3,\dots,L_n)$ be the sequence of links generated by Algorithms~\ref{A1}--\ref{A2}.
Then, for every $i\ge1$:
\begin{itemize}
  \item[a)] If $M_i$ is even, the $i$-th square is \emph{horizontal}. Moreover,
  if $L_i=3$ then necessarily $(L_{i-2},L_{i-1})=(2,1)$ and $M_{i-3}$ is even.
  \item[b)] If $M_i$ is odd, the $i$-th square is \emph{vertical}. In this case,
  letting $j^\star:=\max\{j\in\{2,\dots,i\}:L_j\ne1\}$, one has $L_{j^\star}=2$.
\end{itemize}
\end{cor}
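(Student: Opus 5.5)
The argument is an induction on $i$. It uses three facts: the direction toggles exactly when a link of type $2$ or $3$ is used, the first two squares are horizontal, and Lemma~\ref{l3}~$a)$ controls where type-$3$ links may occur.

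\emph{Parity claim.} By Definition~\ref{orientacion}, square $i$ has the same direction as square $i-1$ iff $L_i=1$. Both link types $2$ and $3$ place square $i$ perpendicular to square $i-1$; they differ only in orientation, not direction. Hence the direction flips exactly when $M$ increments. Since $L_1=L_2=1$, we have $M_1=M_2=0$, and squares $1,2$ are horizontal by the standing convention. Induction then shows that square $i$ is horizontal iff $M_i$ is even, which gives the first sentences of $a)$ and $b)$.

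\emph{Second claim of $a)$.} Suppose $L_i=3$. Lemma~\ref{l3}~$a)$ gives $(L_{i-2},L_{i-1})=(2,1)$ for $i\ge5$; for $i\le4$ I would check directly from Algorithm~\ref{A2} that no $TT$ occurs early enough to produce $L_i=3$. For the parity, the block $(b_{i-4},b_{i-3},b_{i-2})$ ends in $TT$ and corresponds to one compressed $ST$ or $CT$ action. By Lemma~\ref{l2}~$a)$, the compressed count $m$ after that action is even. I would identify this count with $M_i$: each $C$-ending contributes one link of type $2$, and each $T$-ending contributes the pair $L_{i-2}=2$, $L_i=3$, which adds $2$. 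Thus $M_i$ is even. Since $L_{i-1}=1$, we get $M_{i-1}=M_i-1$ and $M_{i-2}=M_i-1$, so $M_{i-3}=M_{i-2}-1$ is even. This is the bookkeeping step I expect to be the main obstacle: one must carefully match the indexing of compressed actions, uncompressed actions and links so that $m$ after a compressed $T$-action equals $M_i$ at the square carrying $L_i=3$.

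\emph{Claim $b)$.} Suppose $M_i$ is odd. Then $M_i\ge1$, so some $j\le i$ has $L_j\neq1$, and $j^\star$ exists with $j^\star\ge3$. Note that $M_{j^\star}=M_i$ because all later links up to $i$ equal $1$. If $L_{j^\star}=3$, part $a)$ already proven would make $M_{j^\star}$ even, a contradiction. Hence $L_{j^\star}=2$.
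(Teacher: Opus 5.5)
Your proof is correct and follows the argument the paper intends: the paper states this corollary without proof, as an immediate consequence of the fact that the direction toggles exactly at type-$2$/$3$ links together with Lemma~\ref{l3}~$a)$, whose proof rests on the parity invariant of Lemma~\ref{l2}~$a)$ in the same way you use it. Your explicit identification of the compressed count $m$ with $M_i$ is a more careful version of the step the paper compresses into ``the number of direction changes up to and including $b_{i-2}$ is even''. Alternatively, combining the horizontal alignment asserted in Lemma~\ref{l3}~$a)$ with your parity claim gives $M_i$ even directly.
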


To ensure realizability, we translate links into spatial instructions $R,L,U,D$ (right, left, up, down), as introduced in Section \ref{s2}.  We also use the opposites $\bar R=L$, $\bar L=R,$ $\bar U=D,$ $\bar D=U$. Intuitively, $L_i=1$ keeps the current direction and orientation; $L_i=2$ switches the direction and reuses the last orientation of that direction; and $L_i=3$ switches the direction and flips the last orientation of that direction.

\begin{algorithm}[htbp]\label{A3}
\caption{Links $\to$ instructions}
\KwIn{Link sequence $(1,1,L_3,\dots,L_n)$}
\KwOut{Instruction sequence $(I_1,\dots,I_{n-1})$}

$j \gets \max\{\,t\in\{2,\dots,n\}:~L_1=\cdots=L_t=1\,\}$\;
\lFor{$k=1$ \KwTo $j-1$}{$I_k \gets R$}
\If{$j<n$}{$I_j \gets D$}

\For{$i=j+2$ \KwTo $n$}{
  $\ell \gets \max\{\,2\le \lambda \le i-1:~L_\lambda\ne 1\,\}$\;
  \lIf{$L_i=1$}{$I_{i-1}\gets I_{i-2}$}
  \lElseIf{$L_i=2$}{$I_{i-1}\gets I_{\ell-2}$}
  \lElse{$I_{i-1}\gets \overline{I_{\ell-2}}$}
}
\end{algorithm}

Since we do not work with an arbitrary link sequence, it is useful to record the specific properties that Algorithm~\ref{A3} enforces on the instructions derived from $(1,1,L_3,\dots,L_n)$ produced by Algorithms~\ref{A1}--\ref{A2}.

\begin{lem}\label{l4}
Let $(1,1,L_3,\dots,L_n)$ be the link sequence produced by Algorithms~\ref{A1}--\ref{A2},
and let $(I_1,\dots,I_{n-1})$ be the instruction sequence obtained via Algorithm~\ref{A3}.
Then:
\begin{itemize}
  \item[a)] For every $i\in\{1,\dots,n-1\}$ one has $I_i\in\{R,L,D\}$; in particular, $U$ never occurs.
  \item[b)] None of the following \emph{contiguous patterns} appears in $(I_1,\dots,I_{n-1})$:
  $LR,$ $RL,$ $LDR,$ $RDL.$
\end{itemize}
\end{lem}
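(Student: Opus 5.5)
We argue by strong induction on the index $i$ of the instruction, using the parity information from Corollary~\ref{c2}. The invariant we carry is: if the $(i+1)$-th square is horizontal (i.e.\ $M_{i+1}$ even), then $I_i\in\{R,L\}$; if it is vertical ($M_{i+1}$ odd), then $I_i=D$. This gives a) directly.

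\textbf{Base case.} The instructions $I_1,\dots,I_{j-1}$ of Algorithm~\ref{A3} are all $R$, and $I_j=D$ when $j<n$. At $I_j$ the link $L_{j+1}$ must equal $2$, since $a_1\in\{SS,SC,ST\}$ and Algorithm~\ref{A2} produces $L=2$ at the first non-$S$ second letter. The corresponding square is vertical, as $M_{j+1}=1$.

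\textbf{Inductive step.} For $L_i=1$ the instruction is copied, so the invariant persists. For $L_i\in\{2,3\}$, let $\ell$ be the last non-$1$ link before $i$.
\begin{itemize}
\item[(i)] If the $i$-th square is horizontal, then $M_{i-1}$ is odd, so $L_\ell$ switched into a vertical run and, by Corollary~\ref{c2}~b), $L_\ell=2$. Hence $I_{\ell-2}$ is the instruction of the square before the vertical run, which is horizontal, so $I_{\ell-2}\in\{R,L\}$.
\item[(ii)] If the $i$-th square is vertical, then $L_i\neq3$, because by Corollary~\ref{c2}~a) a $3$ always lands on a horizontal square. So $L_i=2$, and $I_{\ell-2}$ is the instruction of the square before the previous horizontal run; this is vertical, hence $D$ by induction (or $D$ from the base case).
\end{itemize}
In both cases $I_{i-1}=I_{\ell-2}$ or its opposite, so $U$ never appears. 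The index bookkeeping in Algorithm~\ref{A3} ($I_{i-1}$ is the placement of square $i$) has to be checked carefully here.

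\textbf{Part b).} Since every change of direction goes through a vertical step, $RL$ or $LR$ could only occur when $L_i=1$. But then $I_{i-1}=I_{i-2}$, so these patterns cannot occur.

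The main work is excluding $LDR$ and $RDL$. Such a pattern means a single vertical square (one $D$) framed by two horizontal squares in opposite orientations. The link entering the second horizontal square is then $2$ or $3$ right after a $2$, so the pair $(L_{i-1},L_i)=(2,2)$ or $(2,3)$.
\begin{itemize}
\item[(i)] The pair $(2,3)$ is excluded by Lemma~\ref{l3}~b), since sequences produced this way are locally realizable.
\item[(ii)] For $(2,2)$, the rule $I_{i-1}\gets I_{\ell-2}$ with $\ell=i-1$ gives $I_{i-1}=I_{i-3}$, the orientation of the horizontal square just before the $D$. So the two sides agree and we get $RDR$ or $LDL$.
\end{itemize}
The hard part is case (ii) when the $D$ is the first vertical step (from the base case), and verifying that the value of $\ell$ chosen by the algorithm really points to the horizontal square immediately preceding the single $D$. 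If that index bookkeeping fails, I would instead argue directly from the orientation semantics: $L=2$ reuses the last orientation of that direction, which is exactly the one before the $D$.
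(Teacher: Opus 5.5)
Your proposal is correct and follows essentially the same route as the paper. The strengthened invariant you carry in a) (horizontal square $\Leftrightarrow I\in\{R,L\}$, vertical square $\Leftrightarrow I=D$) is exactly the mapping fact \eqref{map} that the paper calls immediate before part b), so your induction in fact supplies its proof. Your exclusion of $RL$ and $LR$ (two consecutive horizontal squares force $L_i=1$, hence a copy) and of $LDR$ and $RDL$ (the pattern forces $(L_{i-1},L_i)\in\{(2,2),(2,3)\}$, and $(2,3)$ is ruled out by local realizability) mirrors the paper's forward argument from $I_i$. The index bookkeeping you were worried about also checks out for the first $D$: the loop of Algorithm~\ref{A3} starts at $i=j+2$ with $\ell=j+1$, so $L_{j+2}=2$ gives $I_{j+1}=I_{j-1}=R$.
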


\begin{proof}
$a)$ We argue by induction on $i$.
By Algorithm~\ref{A3}, $I_1=R\in\{R,L,D\}$, so the base case holds.
Assume $I_k\in\{R,L,D\}$ for all $k\le i$ and consider $I_{i+1}$.
There are three possibilities:

\begin{enumerate}
    \item [1.] If $L_{i+2}=1$, then $I_{i+1}=I_{i}\in\{R,L,D\}$ by the induction hypothesis.
    \item[2.] If $L_{i+2}=2$, Algorithm~\ref{A3} sets $I_{i+1}=I_{\ell-2}$ where $\ell=\max\{2\le j\le i+1: L_j\ne1\}$. Hence $I_{i+1}$ is one of the previously constructed instructions and belongs to $\{R,L,D\}$.
    \item[3.] If $L_{i+2}=3$, Lemma~\ref{l3} $a)$ yields the pattern $(L_{i},L_{i+1},L_{i+2})=(2,1,3)$, so the last non-$1$ before $i+2$ occurs at $\ell=i$ and Algorithm~\ref{A3} gives $I_{i+1}=\overline{I_{\ell-2}}=\overline{I_{i-2}}$. By Corollary~\ref{c2} $a)$ applied to $i+2$, both $M_{i+2}$ and $M_{i-1}$ are even; hence the $(i-1)$-th square is horizontal and therefore $I_{i-2}\in\{R,L\}$. Since the opposite of a horizontal instruction is again horizontal, $I_{i+1}\in\{R,L\}$.
\end{enumerate}
In all cases $I_{i+1}\in\{R,L,D\}$, completing the induction and part $a)$.

\medskip
Before $b)$ we record the following mapping fact, immediate from Algorithm~\ref{A3} and Corollary~\ref{c2}: for every $i\ge1$,
\begin{equation}\label{map}
\begin{aligned}
&M_i \text{ even and } L_{i+1}=1 \ \Longrightarrow\  I_i\in\{R,L\},\\
&M_i \text{ even and } L_{i+1}=2 \ \Longrightarrow\  I_i=D,\\
&M_i \text{ odd  and } L_{i+1}=1 \ \Longrightarrow\  I_i=D,\\
&M_i \text{ odd  and } L_{i+1}\in\{2,3\}\ \Longrightarrow\  I_i\in\{R,L\}.
\end{aligned}
\end{equation}

\smallskip
$b)$ First, the pairs $(L,R)$ and $(R,L)$ cannot occur.  
Indeed, suppose $I_i\in\{R,L\}$. Then either $M_i$ is even with $L_{i+1}=1$, or $M_i$ is odd with $L_{i+1}\ne1$; in both situations $M_{i+1}$ is even. By Corollary~\ref{c2} $a)$, $L_{i+2}\in\{1,2\}$.
If $L_{i+2}=1$, Algorithm~\ref{A3} gives $I_{i+1}=I_i$;  
if $L_{i+2}=2$, then by  Equation \eqref{map} with $i\leftarrow i+1$ we have $I_{i+1}=D$.  
Thus $I_{i+1}$ is never the horizontal opposite of $I_i$, ruling out $LR$ and $RL$.

Next, the triples $(L,D,R)$ and $(R,D,L)$ cannot occur.  
Assume $I_i\in\{R,L\}$ and $I_{i+1}=D$. By Equation \eqref{map}, this forces $L_{i+2}=2$ and hence $M_{i+2}$ is odd.  
If $L_{i+3}=1$, Algorithm~\ref{A3} copies the previous instruction and $I_{i+2}=D$;  
if $L_{i+3}=2$, then the reference index in Algorithm~\ref{A3} is $\ell=i+2$, so $I_{i+2}=I_{\ell-2}=I_i$.  
In neither subcase does $I_{i+2}$ become the horizontal opposite of $I_i$, so $LDR$ and $RDL$ are excluded.

\end{proof}

We are now ready to prove the main theorem of this section, using the directional instructions as the main tool. To this end, we introduce the following \emph{safety zones} (see Figure~\ref{f6}).\\

  \emph{Right-zone($i$):} This zone consists of all squares that are either above or to the left of the $i$-th square, as well as any square that is located at least two squares above and  to the right of the $i$-th square. The $i$-th square itself is also included in this zone.\\
    
    \emph{Down-zone($i$):} This zone consists of all squares that are above the $i$-th square. The $i$-th square itself is also included in this zone.\\
    
    \emph{Left-zone($i$):} This zone consists of all squares that are either above or to the right of the $i$-th square, as well as any square that is located at least two squares above and  to the left of the $i$-th square. The $i$-th square itself is also included in this zone.\\

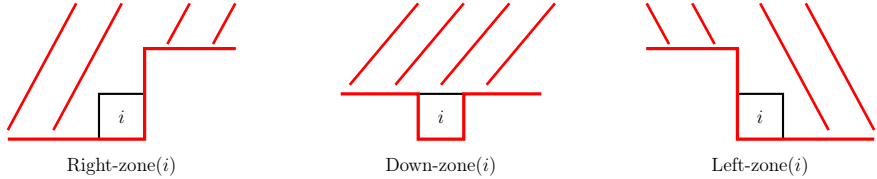
\begin{figure}[htbp]
    \centering    
\begin{tikzpicture}[line width=1pt,scale=0.6, transform shape]

    \draw[black, thick] (2,0) rectangle (3,1);
    \draw[red, very thick] (0,0) -- (3,0) -- (3,2) -- (5,2);
    \node at (2.5,0.5) {$i$};
     \node at (2.5,-0.6) {Right-zone($i$)};
    \foreach \x in {0,1}
        \draw[red] (\x,0.2) -- (\x+1.5,3);
    \foreach \x in {3.5,4.5}
        \draw[red] (\x,2.1) -- (\x+0.49,3);

    \begin{scope}[xshift=5cm]
        \draw[black, thick] (4,0) rectangle (5,1); 
        \draw[red, very thick] (2.3,1) -- (4,1) -- (4,0) -- (5,0) -- (5,1) -- (6.7,1);
        \node at (4.5,0.5) {$i$};
         \node at (4.5,-0.6) {Down-zone($i$)};
        \foreach \x in {2.5,3.5,4.5,5.5}
            \draw[red] (\x,1.2) -- (\x+1.5,3);
    \end{scope}

   \begin{scope}[xshift=19cm, xscale=-1] 
    \draw[black, thick] (2,0) rectangle (3,1);
    \draw[red, very thick] (0,0) -- (3,0) -- (3,2) -- (5,2);
  \node[xscale=-1] at (2.5,0.5) {$i$};
    \node[xscale=-1] at (2.5,-0.6) {Left-zone($i$)};

    \foreach \x in {0,1}
        \draw[red] (\x,0.2) -- (\x+1.5,3);
    \foreach \x in {3.5,4.5}
        \draw[red] (\x,2.1) -- (\x+0.49,3);
\end{scope}
\end{tikzpicture}

\caption{Graphical representation of the three different safety zones. }
\label{f6}

\end{figure}

\begin{thm}\label{t1}
Let $(1,1,L_3,\dots,L_n)$ be the link sequence obtained by applying
Algorithm~\ref{A1} followed by Algorithm~\ref{A2}. Then $(1,1,L_3,\dots,L_n)$ is a valid sequence of links (it is realizable by a general polyomino chain), and its degree-based index equals that of the compressed action sequence used to initialize Algorithm~\ref{A1}.
\end{thm}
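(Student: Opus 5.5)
The index part is immediate: Lemma~\ref{l2}~$c)$ shows that Algorithm~\ref{A1} leaves $TI_f$ unchanged, and Lemma~\ref{l3}~$c)$ shows that Algorithm~\ref{A2} does as well. So the real content is global realizability. My plan is to reduce it to Lemma~\ref{l0}: it suffices to show that, at every step of the construction, the two ending vertices of the newly added square have degree $2$. Equivalently, the new square must not share an edge or a vertex with any earlier square other than its predecessor, apart from the attaching edge. I will translate the links into the instruction word $(I_1,\dots,I_{n-1})$ via Algorithm~\ref{A3}. One check is needed first: the square positions produced by these instructions are exactly the placements that the link definitions prescribe. This follows from the case analysis in Lemma~\ref{l4} together with Corollary~\ref{c2}, since type $2$ reuses the last vertical/horizontal orientation and type $3$ flips it.

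The core of the argument is an invariant stated with the safety zones. I claim by induction on $i$ that all squares $1,\dots,i-1$ lie in $Z(i)$. Here $Z(i)$ is Right-zone$(i)$, Down-zone$(i)$ or Left-zone$(i)$, according as the next instruction $I_i$ is $R$, $D$ or $L$; for $i=n$ I use the zone of $I_{n-1}$. Each zone is designed so that a square placed in direction $I_i$ from square $i$ meets none of the squares in the zone except square $i$ itself. For $R$, the new square $i+1$ is strictly to the right, at the same height. The zone contains only squares above or to the left, plus squares at least two rows above on the right. So no square is edge- or vertex-adjacent to square $i+1$ except square $i$, and the ending vertices keep degree $2$. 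The $D$ and $L$ cases are analogous, the $L$ case by reflection. The base case is square $1$ with $I_1=R$, or the initial run of $R$'s followed by $D$.

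For the inductive step I must show that $\{1,\dots,i\}\subseteq Z(i+1)$, given $\{1,\dots,i-1\}\subseteq Z(i)$. I will split on the pair $(I_i,I_{i+1})$, which by Lemma~\ref{l4}~$a)$ takes values in $\{R,L,D\}^2$.
\begin{itemize}
\item If $I_{i+1}=I_i$, the zone simply translates by one square in the direction of motion, and it contains the old zone together with square $i$.
\item If $(I_i,I_{i+1})=(R,D)$ or $(L,D)$, then Down-zone$(i+1)$ consists of everything on or above the row of square $i$. Every earlier square lies in Right-zone$(i)$, hence on or above that row, so the transition is safe.
\item If $(I_i,I_{i+1})=(D,R)$ or $(D,L)$, the safety depends on earlier history. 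I will use Lemma~\ref{l4}~$b)$: $LR$, $RL$, $LDR$ and $RDL$ never occur. Hence a $D$-run that follows an $R$ must resume with $R$ (or continue with $D$), and symmetrically for $L$.
\item The pairs $(R,L)$ and $(L,R)$ are excluded outright by Lemma~\ref{l4}~$b)$.
\end{itemize}

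The main obstacle is the case $(D,R)$ after a $D$-run. Square $i+1$ lies below the entire previous configuration only if the run had length at least one. Moreover, if the run has length exactly one and was preceded by $R$, the squares above-right come from the predecessor, and they must be at least two rows up for the Right-zone's ``two squares above'' clause. I expect to need a slightly stronger invariant in this case. Specifically, I would record that while $I$ is $D$, all previous squares lie weakly above square $i$ and that squares in the row of square $i$ are absent. I would then verify this case by case, using that vertical runs come only from type-$2$ turns, by Corollary~\ref{c2}~$b)$, and that type-$3$ turns occur only at even parity, from a horizontal square, by Lemma~\ref{l3}~$a)$. If the three zones as drawn prove insufficient, I would refine them by tracking the last turn, but I first try the direct case check.
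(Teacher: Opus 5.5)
Your route is the paper's: reduce validity to Lemma~\ref{l0}, pass to the instruction word of Algorithm~\ref{A3}, propagate a safety-zone invariant, and use Lemma~\ref{l4} to forbid $LR$, $RL$, $LDR$, $RDL$. Your treatment of the index identity via Lemmas~\ref{l2}~$c)$ and \ref{l3}~$c)$ is also identical to the paper's. However, the one nontrivial step, the transition $(I_i,I_{i+1})=(D,R)$ or $(D,L)$, is left open, and both concrete claims you make about it are false.

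First, Lemma~\ref{l4}~$b)$ does \emph{not} force a $D$-run after $R$ to resume with $R$; it does so only for runs of length one. By Lemma~\ref{l3}~$a)$, a type-3 link occurs only in the pattern $(2,1,3)$: after exactly two $D$'s it turns back to horizontal with reversed orientation. For example, for $C_3^{2}$ ($n=7$) the links $(1,1,1,2,1,3,1)$ give $R,R,D,D,L,L$.

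Second, your proposed strengthening ``while $I$ is $D$, the row of square $i$ contains no other square'' fails in your indexing (zone chosen by the \emph{next} instruction $I_i$). At the first $D$ after a horizontal run, square $i-1$ lies in that row. The claim is true only when square $i$ was itself placed by $D$. That is how the paper's proof actually indexes its zones, which is why its Down-zone can be strictly above square $i$, whereas yours must be read as ``on or above its row''.

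No new zones are needed; the missing idea is to look back one more step (strong induction). For $(I_i,I_{i+1})=(D,R)$, the containment $\{1,\dots,i\}\subseteq$ Right-zone$(i+1)$ amounts to this: every earlier square in the row of square $i$ lies weakly to its left. The dangerous positions are directly above and diagonally above-right of square $i+2$. Split on $I_{i-1}$:
\begin{itemize}
\item If $I_{i-1}=R$, the invariant at step $i-1$ (Right-zone$(i-1)$) gives it.
\item If $I_{i-1}=D$, the invariant at step $i-1$ puts squares $1,\dots,i-1$ in rows at or above that of square $i-1$, so square $i$ is alone in its row. This is precisely the case covering the reversals $RDDL$ and $LDDR$.
\item If $I_{i-1}=L$, the triple $L,D,R$ is excluded by Lemma~\ref{l4}~$b)$.
\end{itemize}
The case $(D,L)$ is symmetric. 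This is exactly what the paper compresses into ``combining this with the induction hypothesis rules out those two possibilities.'' Once it is written out, your argument closes and coincides with the paper's.
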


\begin{proof}
We prove validity by induction on $i\ge2$. In view of Lemma~\ref{l0}, it suffices to show that when the $(i+1)$-th square is appended via $L_{i+1}$, the two ending vertices have degree $2$. In fact, we will maintain the invariant that the first $i$ squares lie entirely inside one of the three safety zones
Right-zone(i), Left-zone(i), or Down-zone(i) introduced in Figure~\ref{f6}, according to the instruction $I_i$ provided by Algorithm~\ref{A3}.

\medskip
\noindent\textit{Base case $i=2$.} The two initial squares are horizontal; hence they lie in Right-zone(2) and the ending vertices of the second square have degree 2.

\smallskip
\noindent\textit{Inductive step.} Assume the claim holds for some $i\ge2$: the first $i$ squares lie in the safety zone prescribed by $I_i$, and the ending vertices of the $i$-th square have degree 2. We prove the statement for $i+1$. By Lemma~\ref{l4} $a)$ , $I_i\in\{D,R,L\}$, and we split into cases.

\smallskip
\emph{Case $I_i=D$.}
By the induction hypothesis the first $i$ squares lie at or above the row of the $i$-th square. Appending the $(i+1)$-th square below the $i$-th one places it strictly below all previous squares; hence no previously placed square can meet the two lower (free) vertices of the new square, and these vertices have degree $2$. Moreover, the first $i+1$ squares lie in Down-zone($i+1$).

\smallskip
\emph{Case $I_i=R$.}
By Lemma~\ref{l4} $b)$, the pair $(L,R)$ is forbidden; hence
$I_{i-1}\in\{R,D\}$. In either subcase the induction hypothesis localizes the first $i$ squares in the corresponding safety zone.

\quad\emph{Subcase $I_{i-1}=R$.}
All first $i$ squares lie in Right-zone(i): as there are no squares among the first $i$ located below the $i$-th square or to its right and less than two squares up, it follows that  the ending  vertices of the $i+1$-th square must have degree two. Moreover, the first $i+1$ squares lie in Right-zone($i+1$). Otherwise, if there were a square outside this zone, it would contradict the assumption that the first $i$ squares lie entirely in Right-zone($i$).

\quad\emph{Subcase $I_{i-1}=D$.}
Here the first $i$ squares lie in Down-zone($i$), and the new
square is appended to the right. Thus, there are no earlier squares strictly to the right of the $(i+1)$-th square, nor directly below it. To force one of the two right (free) vertices to have degree $>2$ one would need an earlier square either immediately above the $(i{+}1)$-th square or one unite above and one unite to the right.
But Lemma~\ref{l4} $b)$ excludes the short zig-zag patterns $(L,D,R)$ and $(R,D,L)$, and combining this with the induction hypothesis (location of the first $i$ squares) rules out those two possibilities. Hence, the free vertices have degree $2$, and
again all $i+1$ squares lie in Right-zone($i+1$).

\smallskip
\emph{Case $I_i=L$.}
This is symmetric to the case $I_i=R$, using Left-zone and the
forbidden patterns in Lemma~\ref{l4} $b)$.

\medskip
This completes the induction. By Lemma~\ref{l0} the link sequence is valid. For the index identity, Lemma~\ref{l3} $c)$ shows that the translation from actions to links preserves $TI_f$, and Lemma~\ref{l2} $c)$ shows that Algorithm~\ref{A1} preserves $TI_f$ on compressed actions; chaining both equalities yields the
second claim.
\end{proof}

\section{Dynamic Programming Approach Applied to General Polyomino Chains} \label{s5}
After overcoming the main challenge of optimizing degree-based indices over general polyomino chains through the use of actions, this section focuses on the specific components of the dynamic programming framework.\\

Let $TI_f$ be a degree-based index. For $n\ge 3$, define $M_f(n,S)$ as the \emph{maximum value} of $TI_f$ over all sequences of compressed actions with $n$ squares, whose last action has the second letter $S$. Likewise, define $M_f(n,C)$ as the \emph{maximum value} of $TI_f$ over all sequences of compressed actions with $n$ squares, whose last action has the second letter in $\{C,T\}$. For the small sizes $3\le n\le 5$ (where the compressed recurrence stemming from~ Equation \eqref{e1} does not yet apply), a brute-force check yields

\begin{align*}
M_f(3,S)&=2f(2,2)+4f(2,3)+4f(3,3),\\
M_f(3,C)&=2f(2,2)+4f(2,3)+2f(3,4)+2f(2,4).
\end{align*} For $n=4,5$ one obtains

\begin{align*}
M_f(4,S)&=\max\big\{M_f(3,S)+g_f(SS), M_f(3,C)+g_f(CS)\,\big\},\\
M_f(4,C)&=\max\big\{M_f(3,S)+g_f(SC), M_f(3,C)+g_f(CC)\,\big\},\\
M_f(5,S)&=\max\big\{M_f(4,S)+g_f(SS),M_f(4,C)+g_f(CS)\,\big\},\\
M_f(5,C)&=\max\Big\{M_f(4,S)+g_f(SC),M_f(4,C)+g_f(CC),m+g_f(CS)+g_f(TT)\Big\},
\end{align*} 
 where $m=2f(2,2)+4f(2,3)+2f(2,4)+2f(3,4).$ Building on the local additivity given by Equation  \eqref{e1} and the admissibility constraints for sequences of compressed actions (Section~\ref{s3}), we obtain the following DP for $n\ge 6$:

\begin{thm}\label{t1}
Let $TI_f$ be a degree-based index, and let $M_f(n,S)$ and $M_f(n,C)$ be defined as in Section~\ref{s5}. Then, for every $n\ge 6$,
$$
M_f(n,S)=\max\big\{M_f(n-1,S)+g_f(SS), M_f(n-1,C)+g_f(CS)\big\},
$$
and
\begin{align*}
M_f(n,C)=\max\big\{&M_f(n-1,S)+g_f(SC), M_f(n-1,C)+g_f(CC),\\
                   &M_f(n-3,S)+g_f(ST), M_f(n-3,C)+g_f(CT)\big\}.
\end{align*}
\end{thm}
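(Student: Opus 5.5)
The plan is a standard dynamic-programming argument: decompose an optimal compressed action sequence according to its last entry, and show that the two inequalities ($\le$ and $\ge$) hold for each recurrence. Throughout we use the additivity of Equation~\eqref{e1}, extended to compressed actions via the definitions of $g_f(ST)$ and $g_f(CT)$. For a sequence of compressed actions $(a_1,\dots,a_N)$ on $n$ squares, we first record how many squares each entry consumes: $SS,SC,CS,CC$ each account for one square, while $ST$ and $CT$ each stand for three uncompressed actions and hence three squares. Consequently, deleting a final entry $a_N\in\{SS,SC,CS,CC\}$ leaves a sequence on $n-1$ squares, and deleting a final entry $a_N\in\{ST,CT\}$ leaves one on $n-3$ squares. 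In both cases the index drops by exactly $g_f(a_N)$.

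For the upper bound ($\le$), we take a maximizer for $M_f(n,S)$. Its last entry has second letter $S$, so it is $SS$ or $CS$; it cannot be $ST$ or $CT$, since those end in $T$. By the compatibility rules, a final $SS$ must follow an action ending in $S$, and a final $CS$ must follow an action ending in $C$ or $T$. We need to check that after $T$ the next action lies in $\{CS,CC\}$, which holds because the uncompressed $TT$ is followed by $CS$ or $CC$. The truncated prefix therefore belongs to the class counted by $M_f(n-1,S)$ or by $M_f(n-1,C)$, which gives the bound. The same reasoning applies to $M_f(n,C)$, with final entries $SC$ or $CC$ (prefix on $n-1$ squares, ending in $S$ or in $C/T$ respectively) and $ST$ or $CT$ (prefix on $n-3$ squares). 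The first letter of $ST$ is $S$ and that of $CT$ is $C$, and these dictate whether the prefix ends in $S$ or in $C/T$. The lower bound ($\ge$) is the converse: we append the corresponding action to an optimal prefix and check that the result is again a sequence of compressed actions with the stated final letter.

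The main obstacle is boundary and admissibility bookkeeping rather than the recursion itself. Three points need care. First, the prefix obtained after removing $ST$ or $CT$ must still be a legal compressed sequence. For this we need $n-3\ge3$, i.e.\ $n\ge6$, so that $M_f(n-3,\cdot)$ is defined and the prefix carries a nonempty initial structure; this is exactly why small $n$ were handled by brute force, where the special term $m+g_f(CS)+g_f(TT)$ appears at $n=5$. Second, the restriction that $TT$ may not follow $(TT,CS)$ (i.e.\ no $(TT,CS,TT)$) is automatically respected by compression: appending $CT$ after a prefix ending in $T$ produces $(\dots,TT,CC,CS,TT)$, which is allowed. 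We must confirm this is the only constraint linking consecutive compressed entries, so that conditions on the prefix depend only on its final letter. Third, we must confirm that appending never violates the first-entry rule $a_1\in\{SS,SC,ST\}$. This is immediate because the first entry is unchanged.

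Finally, we note that realizability is not needed inside this proof. The maxima are defined over compressed action sequences, and by Theorem~\ref{t1} of Section~\ref{s4} any optimizer can afterwards be converted by Algorithms~\ref{A1}--\ref{A2} into a valid link sequence with the same value. The recurrence is therefore a purely combinatorial statement about the action grammar, and the two-sided inequality argument above completes it.
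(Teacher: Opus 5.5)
Your proposal is correct and takes essentially the same route as the paper: the lower bound comes from appending the appropriate final compressed action to optimal prefixes with $n-1$ or $n-3$ squares, and the upper bound comes from stripping the last action of a maximizer and using the additivity of the recurrence. Your extra bookkeeping on admissibility (what may follow a $T$-ending, the automatic exclusion of $(TT,CS,TT)$ under compression, and why $n\ge 6$ is needed) only makes explicit what the paper's proof leaves implicit.
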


\begin{proof}

We prove the recurrence for $M_f(n, C)$; the case of $M_f(n, S)$ is analogous. By definition of $M_f(\cdot, \cdot)$, there exist compressed action sequences
$$
(a^1_1, \dots, a^1_{k_1}), \quad
(a^2_1, \dots, a^2_{k_2}), \quad
(a^3_1, \dots, a^3_{k_3}), \quad
(a^4_1, \dots, a^4_{k_4})
$$
with ($n-1$), ($n-1$), ($n-3$) and ($n-3$) squares, respectively, such that
$$
\begin{aligned}
&S_{k_1}=S, \qquad &&M_f(n-1,S)=TI_f((a^1_1,\dots,a^1_{k_1})),\\
&S_{k_2}\in\{C,T\}, \qquad &&M_f(n-1,C)=TI_f((a^2_1,\dots,a^2_{k_2})),\\
&S_{k_3}=S, \qquad &&M_f(n-3,S)=TI_f((a^3_1,\dots,a^3_{k_3})),\\
&S_{k_4}\in\{C,T\}, \qquad &&M_f(n-3,C)=TI_f((a^4_1,\dots,a^4_{k_4})).
\end{aligned}
$$
By Equation~\eqref{e1} and the admissibility constraints for
compressed actions (Section~\ref{s3}), appending an admissible last action yields
$$
M_f(n-1,S) + g_f(SC) = TI_f\big((a^1_1, \dots, a^1_{k_1}, SC) \big),
$$
$$
M_f(n-1,C) + g_f(CC) = TI_f\big((a^2_1, \dots, a^2_{k_2}, CC) \big),
$$
$$
M_f(n-3,S) + g_f(ST) = TI_f\big((a^3_1, \dots, a^3_{k_3}, ST) \big),
$$
$$
M_f(n-3,C) + g_f(CT) = TI_f\big((a^4_1, \dots, a^4_{k_4}, CT) \big).
$$
and each of the four right-hand sides corresponds to a compressed action sequence with $n$ squares and the last second letter in $\{C,T\}$. Since $M_f(n,C)$ is the maximum $TI_f$ among such sequences, we obtain
$$
\begin{aligned}
M_f(n,C) \ge \max\big\{&M_f(n-1,S)+g_f(SC),\ M_f(n-1,C)+g_f(CC),\\
                    &M_f(n-3,S)+g_f(ST),M_f(n-3,C)+g_f(CT)\big\}.
\end{aligned}
$$

For the reverse inequality, let $(a_1,\dots,a_k)$ be a compressed action sequence with $n$ squares, $S_k\in\{C,T\}$, and
$M_f(n,C)=TI_f((a_1,\dots,a_k))$. There are two cases.

\emph{Case $S_k=C$.} Then $a_k\in\{SC,CC\}$ and by Equation  \eqref{e1}
$$
M_f(n,C)=TI_f((a_1,\dots,a_{k-1}))+g_f(a_k)\le
\max\{M_f(n-1,S)+g_f(SC),M_f(n-1,C)+g_f(CC)\}.
$$

\emph{Case $S_k=T$.} Then $a_k\in\{ST,CT\}$, which represents the compression of a three-step block; hence, the compressed action sequence $(a_1,\dots,a_{k-1})$ has $(n-3)$
squares. Using Equation \eqref{e1},
$$
M_f(n,C)=TI_f((a_1,\dots,a_{k-1}))+g_f(a_k)\le
\max\{M_f(n-3,S)+g_f(ST), M_f(n-3,C)+g_f(CT)\}.
$$

Combining both cases yields the desired upper bound and proves the recurrence for $M_f(n,C)$.
\end{proof} By the linearity of degree-based indices (see Equation \eqref{TI}), we have
$$
TI_{-f}((a_1,\dots,a_k))=-TI_f((a_1,\dots,a_k)).
$$
Hence, minimizing $TI_f$ over all sequences of compressed actions with $n$ squares is equivalent to maximizing $TI_{-f}$ over the same class. As a direct consequence of Theorem~\ref{t1} and Equation~\eqref{e1}, we obtain a method to compute the \emph{minimum} value of any degree-based index among all compressed
action sequences with $n$ squares and a prescribed terminal second letter; denote these  minimums by $m_f(n,S)$ and $m_f(n,C)$ for $n\ge 3$.
\begin{cor}\label{c1}
Let $TI_f$ be a degree-based index. For every $n\ge 3$,
$$
m_f(n,S)=-M_{-f}(n,S),\qquad m_f(n,C)=-M_{-f}(n,C).
$$
\end{cor}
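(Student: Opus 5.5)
The plan is to reduce minimization to maximization by replacing $f$ with $-f$, and then apply Theorem~\ref{t1} to $-f$. The underlying fact is that $TI_{-f}=-TI_f$ on every sequence of compressed actions.

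\textbf{Step 1: the sign flip passes through the recurrence.} I would check that the local increments satisfy $g_{-f}(a)=-g_f(a)$ for every action $a\in\{SS,SC,CS,CC,TT\}$. This holds because each $g_f$ in Lemma~\ref{l1} is a linear combination of values of $f$. By the definitions of $g_f(\mathrm{ST})$ and $g_f(\mathrm{CT})$ as sums, the same identity extends to $\mathrm{ST}$ and $\mathrm{CT}$.

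\textbf{Step 2: the base value flips too.} The initial value is $TI_f$ of the three-square chain, which appears in $M_f(3,\cdot)$. It is likewise linear in $f$, since Equation~\eqref{TI} is a sum of values of $f$. Inducting on the length through Equation~\eqref{e1}, I would conclude that $TI_{-f}((a_1,\dots,a_k))=-TI_f((a_1,\dots,a_k))$ for every sequence of compressed actions.

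\textbf{Step 3: the class of sequences does not depend on $f$.} Fix $n\ge3$ and a terminal letter $X\in\{S,C\}$, where $C$ means the second letter lies in $\{C,T\}$. Let $\mathcal{A}(n,X)$ be the set of sequences of compressed actions with $n$ squares and terminal letter $X$. The compatibility rules are purely combinatorial, so this set is the same for $f$ and $-f$. It is finite and nonempty, so both extrema are attained.

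\textbf{Step 4: conclude.} By Step 2,
\[
m_f(n,X)=\min_{\mathcal{A}(n,X)}TI_f=-\max_{\mathcal{A}(n,X)}\bigl(-TI_f\bigr)=-\max_{\mathcal{A}(n,X)}TI_{-f}=-M_{-f}(n,X).
\]
The values $M_{-f}(n,X)$ are computed by the base cases $3\le n\le5$ and by the recurrence of Theorem~\ref{t1} applied to the symmetric function $-f$.

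The only delicate point is the uniform linearity in $f$ for the small cases $n\le5$, in particular the special term $m$ in $M_f(5,C)$. Each of these is an explicit linear combination of values of $f$, so I expect no real obstacle there.
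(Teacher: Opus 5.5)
Your proposal is correct and follows the paper's argument: $TI_{-f}=-TI_f$ on every sequence of compressed actions, and the class of compressed action sequences with $n$ squares and a prescribed terminal letter does not depend on $f$, so $\min TI_f=-\max TI_{-f}$ over that class. Your Steps~1--2 are a more careful version of the paper's one-line appeal to Equation~\eqref{TI}. That extra care is warranted, because values on action sequences are defined through the recurrence~\eqref{e1}, from base values linear in $f$ and with $g_{-f}=-g_f$, rather than directly as graph indices.
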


By Corollary \ref{c1}, it suffices to consider the maximization problem. Moreover, the recurrences in Theorem \ref{t1} not only yield the optimal value of $TI_f$ over all compressed action sequences with $n$ squares and a prescribed terminal second letter, but also provide a constructive procedure to obtain at least one maximizing compressed action sequence. In particular, the whole computation (value and reconstruction) runs in $O(n)$ time.

Linking back to the previous sections, Theorem \ref{t1} together with Algorithms \ref{A1} and \ref{A2} allows us to pass the obtained extremal compressed action sequence to an extremal general polyomino chain with $n$ squares.

Recalling that our main objective is to determine the maximum value of a degree-based index among all general polyomino chains with a fixed number of squares, say $ n $.  The quantity of real interest is for $n\geq 3$
$$
M_f(n) := \max\{M_f(n,S), M_f(n,C)\}.
$$
Thus, the problem reduces to analyzing $M_f(n,S)$ and $M_f(n,C)$, to which all the preceding discussion applies.

At this point, it is worth emphasizing that, by extending Proposition~3.4 from~\cite{a0}, one can recover not just a single extremal compressed
action sequence, but the complete collection of such sequences. This is
achieved by systematically tracking ties in the $\max\{\cdot\}$
expressions appearing in Theorem~\ref{t1}. By doing so, all possible
choices of actions at each recursive step are explored, allowing the
construction of every extremal action sequence attaining the value
$M_f(n,S)$ or $M_f(n,C)$. The overall procedure runs in linear time with respect to the
number of all extremal compressed action sequences, which in the
worst case grows exponentially with respect to $n$, the number of
squares.

\begin{rem}\label{r1}
  Although we are able to generate all extremal compressed action
sequences, in contrast to the situation in~\cite{a0}, which dealt
exclusively with polyomino chains, we cannot guarantee that all such
sequences are valid in the present setting. Consequently, the
combination of Theorem~\ref{t1} with Algorithms~\ref{A1} and~\ref{A2}
remains the only efficient approach for directly constructing at least an
extremal general polyomino chain. Furthermore, observe that applying
Algorithm~\ref{A1} to the entire collection of extremal
compressed action sequences would indeed transform them into valid
ones; however, this process would retain only a subset of all valid
extremal action sequences.
\end{rem}

To address the problem raised in Remark~\ref{r1}, it is necessary to
identify criteria and algorithms capable of distinguishing valid action
sequences. To this end, Algorithm~\ref{A4} describes how to generate,
from a given sequence of actions, all corresponding sequences of
instructions. The translation between actions and instructions is
established via the action-link relations introduced in
Definition~\ref{actions} and Algorithm \ref{A3}. By combining Algorithm~\ref{A4} with a
subsequent analysis of each resulting instruction sequence, carried out
by applying Lemma~\ref{l0}, we obtain a systematic procedure to
determine whether a given action sequence is valid. We refer to this
procedure as the \emph{exhaustive analysis of the action sequence}.

\begin{algorithm}[htbp]\label{A4}
\caption{A sequence of actions $\to$ all of its corresponding sequences of instructions}
\KwIn{Action sequence $(b_1,\dots,b_{n-2})$}
\KwOut{Instruction sequence $(I_1,\dots,I_{n-1})$}

$I_1 = R$; \quad $flag \gets 0$;

\For{$i = 1$ \KwTo $n-2$}{
    \lIf{$b_i = SS$ or $b_i = CS$}{$I_{i+1} \gets I_{i}$}
    \lElseIf{$b_i = CC$}{$I_{i+1} \gets I_{i-1}$}
    \lElseIf{$b_i = TT$}{$I_{i+1} \gets \overline{I_{i-2}}$}
    \lElseIf{$flag = 0$ and $b_i = SC$}{$I_{i+1} \gets D$ and $flag \gets 1$}
    \lElseIf{$b_i = SC$ and $b_{i-2} \in \{SC, CC, TT\}$}{$I_{i+1} \gets I_{i-2}$}
    \lElse{$I_{i+1} \in \{R,L,D,U\} \setminus \{I_i,\overline{I_i}\}$}
}
\end{algorithm}

\begin{rem}\label{r2}
Observe that, in Algorithm \ref{A4}, the generation of multiple instructions can occur only through actions of type $SC$. This is because $SC$ is the unique action whose corresponding triad
in Definition~\ref{actions} does not have its last entry fixed.
Consequently, in an action sequence $(b_1,\dots,b_{n-2})$, the
occurrences of $SC$ are precisely the positions at which one may choose between two distinct corresponding instructions. We therefore refer to the $SC$ actions as the \emph{pivots of the action sequence}.

Also, in Algorithm \ref{A4} we decide to fix the instruction corresponding to the first pivot found in the sequence because the graphs given by the sequences $(R, \cdots, R, D)$ and $(R, \cdots, R, U)$ are isomorphic. And we fix as well the instruction corresponding to every pivot with $b_{i-2} \in \{SC, CC, TT\}$ because choosing it to be equal to $\overline{I_{i-2}}$ would turn it into a $TT$ action, which does not correspond to the effect it should have on the $TI_f$.

\end{rem}

In light of Remark~\ref{r2}, it is straightforward to determine that the
complexity of Algorithm~\ref{A4} is \(O(2^{\text{number of pivots}})\). Moreover, since the number of pivots in a given action sequence can be typically comparable to \(n/2\) (because each use of an action $SC$ needs at least an intermediate action different from $SC$), it follows that the exhaustive analysis of an action sequence has exponential time complexity in \(n/2\). Consequently, the development of an efficient algorithm, or the identification of purely necessary or sufficient conditions that enable  the discrimination of valid action sequences, remains an open problem.

Consistent with the previous discussion, we present the following simple lemma, which will play a crucial role in the next section.

\begin{lem}\label{l5}
    Let $(b_1, \cdots, b_{n-2})$ be a sequence of actions and $l$ the index of the sequence's second pivot. Suppose $b_i$ with $i \geq l$ is a pivot and $(I_1, \cdots, I_{i})$ is a sequence of instructions obtained by applying Algorithm \ref{A4} up to action $b_{i-1}$, then

    \begin{itemize}
        \item If $b_{i-2} \in \{SC,CC,TT\}$, then the instruction for the pivot $b_i$ is fixed and equals $I_{i-2}$.
        
        \item If $I_{i-6} = I_{i-5}$, $b_{i-5} = SC, b_{i-4} = CS, b_{i-3} = TT, b_{i-2} = CS$ and  $b_{i-1} = SS$, then the instruction for the pivot $b_i$ is fixed and equals $I_{i-4}$, provided that $(I_{i-6},I_{i-5},I_{i-4},I_{i-3},I_{i-2},I_{i-1},I_{i}, I_{i+1})$ is valid.

         \item If $I_{i-7} = I_{i-6}$, $b_{i-6} = SC, b_{i-5} = CS, b_{i-4} =TT, b_{i-3} = CS, b_{i-2} = b_{i-1}= SS, b_{i+1} =CS$ and $b_{i+2} = TT$, then the instruction for the pivot $b_i$ is fixed and equals $I_{i-5}$, provided that $(I_{i-7},I_{i-6},I_{i-5},I_{i-4},I_{i-3},I_{i-2},I_{i-1},I_{i},I_{i+1},I_{i+2},I_{i+3})$ is valid.

         \item If $I_{i-4} = I_{i-3}$, $b_{i-3} = SC, b_{i-2} = CS, b_{i-1} =SS,  b_{i+1} =CS, b_{i+2} =TT$ and $b_{i+2} = CS$, then the instruction for the pivot $b_i$ is fixed and equals $I_{i-3}$, provided that $(I_{i-4},I_{i-3},I_{i-2},I_{i-1},I_{i},I_{i+1},I_{i+2},\\I_{i+3},I_{i+4})$ is valid.

    \item If $I_{i-8} = I_{i-7}$, $b_{i-7} = SC, b_{i-6} = CS, b_{i-5} =TT, b_{i-4} = CS, b_{i-3} = b_{i-2}= b_{i-1}=SS, b_{i+1} =CS, b_{i+2} =TT$ and $b_{i+3} = CS$, then the instruction for the pivot $b_i$ is fixed and equals $I_{i-6}$, provided that $(I_{i-8},I_{i-7},I_{i-6},I_{i-5},I_{i-4},I_{i-3},I_{i-2},I_{i-1},I_{i},I_{i+1},I_{i+2},I_{i+3},I_{i+4})$ is valid.

    \end{itemize}
\end{lem}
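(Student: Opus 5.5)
The plan is to treat each bullet of Lemma~\ref{l5} separately. In every case the pivot $b_i=SC$ admits, by Remark~\ref{r2}, only the two candidates $I_{i+1}\in\{R,L,D,U\}\setminus\{I_i,\overline{I_i}\}$. The argument then consists in showing that one of them produces either a non-$SC$ action or an invalid instruction sequence. Throughout, I will read off the instructions forced by Algorithm~\ref{A4} for the non-pivot actions ($SS,CS$ copy, $CC$ copies $I_{i-1}$, $TT$ reverses $I_{i-2}$). Validity will be checked by the degree criterion of Lemma~\ref{l0}: a placement is invalid as soon as a new square overlaps, or shares an edge or vertex of an ending pair with, an earlier square.

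The first bullet is essentially Remark~\ref{r2}. If $b_{i-2}\in\{SC,CC,TT\}$, then the second letter of $b_{i-2}$ records a direction change at step $i$, and $b_{i-1}$ begins with $C$ and ends with $S$, so $b_{i-1}=CS$. Choosing $I_{i+1}=\overline{I_{i-2}}$ would make the triad of links $(2\text{ or }3,1,3)$, which by Definition~\ref{actions} is the action $TT$ and not $SC$. It would therefore contradict $b_i=SC$ and change $g_f$. The only remaining candidate is $I_{i-2}$. Since $i\ge l$, the branch with the flag is not used, so this is exactly the fixed branch of Algorithm~\ref{A4}.

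For bullets two to five, I will fix coordinates with the earlier run of equal instructions $I_{i-6}=I_{i-5}$ (respectively $I_{i-7}=I_{i-6}$, $I_{i-4}=I_{i-3}$, $I_{i-8}=I_{i-7}$) pointing, say, $R$, and use the displayed action block to write out every instruction in the window explicitly. The block $SC,CS,TT,CS$ gives the instructions $R$, then $D$ (w.l.o.g.), then $D$, then $L$, and the trailing $SS$'s continue $L$. This is a tight U-turn whose return run lies one row below the original run. At the pivot the two candidates are $U$ and $D$. One of them sends the chain back toward the original run: either it overlaps a square of the U-turn, or, after the forced continuation $CS,TT,\dots$ listed in the hypothesis, the next squares land on or adjacent to the cells of the first run. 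In that case the two ending vertices of some new square acquire degree $>2$, so the window is not valid by Lemma~\ref{l0}. The proviso ``provided the window is valid'' is exactly what lets us discard this branch. What remains is the candidate equal to the instruction named in the statement ($I_{i-4}$, $I_{i-5}$, $I_{i-3}$, $I_{i-6}$ respectively), which is the inner U-turn direction $D$.

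The main obstacle is the bookkeeping in these geometric cases, particularly bullets three and five. There the conflict appears only after the subsequent $CS,TT$ moves, so one must track about a dozen cells and verify which lattice vertex is shared. I would handle this by drawing each window on an explicit grid and listing the cell coordinates. This turns each case into a finite check of coordinate coincidences under the two possible pivot choices, relying on symmetry (reflection across the run's axis) to avoid duplicating computations.
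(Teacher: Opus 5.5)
\textbf{Verdict: your plan has a genuine gap at bullet four, plus fixable inaccuracies elsewhere.} Your treatment of the first bullet is fine. It is exactly the $b_{i-2}\in\{SC,CC,TT\}$ branch of Algorithm~\ref{A4}, justified as in Remark~\ref{r2}. The paper states Lemma~\ref{l5} without proof, so there is no argument of theirs to compare against.

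\textbf{Bullet four.} You treat bullets two to five with one picture, and that picture is false for bullet four. There the actions before the pivot are $b_{i-3}=SC$, $b_{i-2}=CS$, $b_{i-1}=SS$, so no $TT$ precedes the pivot. The tight turn $CS,TT,CS$ comes \emph{after} it. The hypothesis ``$b_{i+2}=CS$'' has to be read as $b_{i+3}=CS$: as written it contradicts $b_{i+2}=TT$, and it is what forces $I_{i+4}$.

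Normalize $I_{i-4}=I_{i-3}=R$ and $I_{i-2}=D$. Then $I_{i-1}=I_i=D$, so the two candidates at the pivot are $R$ and $L$, not $U$ and $D$. The instruction named in the statement, $I_{i-3}$, equals $R$ (the direction of the first run), not ``the inner U-turn direction $D$''. Your closing sentence is therefore inconsistent in this case.

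The obstruction is also of a different kind. Label cells by their lower-left corners and put square $i-4$ at $(0,0)$. Then squares $i-3,\dots,i+1$ sit at $(1,0),(2,0),(2,-1),(2,-2),(2,-3)$.
\begin{itemize}
\item Choosing $L$ forces $I_{i+2}=L$, $I_{i+3}=\overline{I_i}=U$, $I_{i+4}=U$. This puts squares $i+2,\dots,i+5$ at $(1,-3),(0,-3),(0,-2),(0,-1)$. Square $i+5$ then shares an edge with square $i-4$, and the cells $(1,-1),(1,-2)$ are enclosed.
\item Choosing $R$ produces the mirror-image hook on the right, which touches nothing.
\end{itemize}
So bullet four needs its own case check; as written, your argument does not establish it.

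\textbf{Bullets two, three and five.} Your strategy works here, but several of the geometric claims you state are wrong.
\begin{itemize}
\item The block $SC,CS,TT,CS$ yields $D,D,L,L$; the $R$'s are the preceding $I_{i-6}=I_{i-5}$.
\item The return run lies \emph{two} rows below the first run, with an empty row in between. In no case does a candidate overlap an existing square.
\item In bullet two (square $i-6$ at $(0,0)$), the bad choice $U$ puts square $i+2$ at $(-1,-1)$. It touches square $i-6$ only at the corner $(0,0)$ and closes a $1\times 2$ non-square face, with no continuation involved.
\item In bullets three and five (first square of the window at $(0,0)$), the bad choice puts the last square of the window ($i+4$, resp.\ $i+5$) at $(-1,0)$. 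It then shares an edge with the first square of the window.
\end{itemize}

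\textbf{Use of Lemma~\ref{l0}.} Lemma~\ref{l0} only gives sufficiency: degree-$2$ ending vertices imply validity. To \emph{discard} a branch you need the converse, which you must argue directly. A contact between non-consecutive squares either adds an edge to the inner dual, so it is not a path, or it encloses an empty region. That region is an interior face that is either not a unit square or an extra cell with four neighbours; in both cases Definition~\ref{polyomino} fails.
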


\section{Extremal General Polyomino Chains for $R_{-1}$}\label{s6}
The Randi\'c index was originally introduced by the chemist Milan Randi\'c in 1975 \cite{a23} as 
\begin{equation*}
    R(G)=\sum_{uv\in E(G) } \frac{1}{\sqrt{d_u d_v}}.
\end{equation*} This index was later generalized by Bollobás and Erdös in 1998 \cite{a24} for any $\alpha \in \mathbb{R}$ as
\begin{equation*}
    R_{\alpha}(G)=\sum_{uv\in E(G) } (d_u d_v)^{\alpha}.
\end{equation*} 
Nowadays, the generalized Randi\'c index has become one of the most extensively studied, widely applied, and well-recognized topological indices \cite{a21}. Its importance stems from its strong correlation with various chemical properties, including the boiling points, surface area, and solubility in water of alkanes \cite{a10}. $R_{-1}$, known as the second modified Zagreb index, is of particular interest due to its connection with the eigenvalues of the normalized Laplacian matrix of the graph \cite{a20}. 

Regarding extremal general polyomino chains with $n$ squares under the generalized Randi\'c index ($R_{\alpha}$), the following facts are known (see Theorems 2.1, 2.2 and 3.9 in \cite{Rada2015}): 
\begin{itemize}
    \item[a)] If $\alpha > 0$, then $Li_n$ minimizes and $Z_n$ maximizes $R_{\alpha}$.
    \item[b)] If $-0.84313 < \alpha < 0$, then $Z_n$ minimizes and $Li_n$ maximizes $R_{\alpha}$.
    \item[c)] If $-1.23853 < \alpha < -1$, then $Z_n$ minimizes and $Z_n^3$ (defined below) maximizes $R_{\alpha}$.
    \item[d)] If $-1 \leq \alpha \leq -0.84313$, then $Z_n$ minimizes $R_{\alpha}$.
\end{itemize}

In the same paper, the author raised the interesting problem of determining the extremal values of $R_{\alpha}$ over the remaining ranges of $\alpha$. Motivated by this gap, in this section we address the case $\alpha=-1$ by applying the framework developed above.

Before stating the main result for $R_{-1}$, we fix a compact notation for the canonical families of general polyomino chains that will appear in the extremal configurations.

\begin{notn}\label{n1}
Let $m \ge 0$. 
\begin{itemize}
    \item We denote by $C_{\ell_1,\ldots,\ell_r}^{\alpha_1,\ldots,\alpha_r}$ 
the family of general polyomino chains in which all vertical segments have 
length three, and the total of $\sum_{i=1}^r \alpha_i$ horizontal segments, each 
generated by a type-3 link (excluding the first one), consists of $\alpha_j$ segments of length $\ell_j$, 
for $j = 1,\ldots,r$.
  In particular:
\begin{itemize}
  \item $C_{3}^{m+1}$: exactly $m{+}1$ horizontal segments of length $3$. See Figure \ref{f10} $a)$.
  \item $C_{3,4}^{m,1}$: $m$ horizontal segments of length $3$ and one horizontal segment of length $4$.  See Figure \ref{f10} $b).$
  \item $C_{3,5}^{m,1}$: $m$ horizontal segments of length $3$ and one horizontal segment of length $5$.
  \item $C_{3,6}^{m,1}$: $m$ horizontal segments of length $3$ and one horizontal segment of length $6$.
  \item $C_{3,4}^{m-1,2}$: $m{-}1$ horizontal segments of length $3$ and two horizontal segments of length $4$.
  \item $C_{3,4}^{m-2,3}$: $m{-}2$ horizontal segments of length $3$ and three horizontal segments of length $4$.
  \item $C_{3,4,5}^{m-1,1,1}$: $m{-}1$ horizontal  segments of length $3$, one horizontal segment of length $4$, one horizontal segment of length $5$.
\end{itemize}

\item We denote by $\bar{C}_{\, 3}^{\, m+1}$ the family of general polyomino chains with $m+1$ horizontal segments and $m+1$ vertical segments, where all segments have length three. For $0 \le i \le m$, the first $i+1$ pairs of consecutive horizontal and vertical segments are generated by a type-3 link and a type-2 link, respectively (excluding the first segment), while the last $m-i$ pairs of consecutive horizontal and vertical segments are generated by a type-2 link and a type-3 link, respectively. See Figure \ref{f10} $c)$, which corresponds to $i=1.$

\item We denote by $\bar{C}_{\, 3,4}^{\,m,1}$ the family of general polyomino chains consisting of $m+1$ horizontal segments and $m+1$ vertical segments. For $0 \le i \le m$, the first $i+1$ pairs of consecutive horizontal and vertical segments are generated by a type-3 link and a type-2 link, respectively (excluding the first pair). The remaining $m-i$ pairs of consecutive horizontal and vertical segments are generated by a type-2 link and a type-3 link, respectively. All segments have length three, except for a single horizontal segment of length four, which may be either the first segment or a horizontal segment generated by a type-3 link. See Figure \ref{f10} $b),$ which corresponds to $i=m.$

\item We denote by $\underaccent{\bar}{C}_{\, 3,4}^{\, m,1}$ the family of general polyomino chains consisting of $m+1$ horizontal segments and $m+1$ vertical segments. For $0 \le i \le m$, the first $i+1$ pairs of consecutive horizontal and vertical segments are generated by a type-3 link and a type-2 link, respectively (excluding the first pair). The remaining $m-i$ pairs of consecutive horizontal and vertical segments are generated by a type-2 link and a type-3 link, respectively. All segments have length three, except for a single vertical segment of length four, which may be either the last vertical segment generated by a type-2 link or a vertical segment generated by a type-3 link. See Figure \ref{f10} $d),$ which corresponds to $i=0.$

 \item We use $Z_n^{3}$ to denote a general polyomino chain with $n$ squares, in which all horizontal segments (excluding the first one) are generated by a type-3 link, and every segment has 
length three (except when $n$ is even, in which case the last segment has length four). 
\end{itemize}
 In particular, \( Z_n^{3} = C_{3}^{m+1} \) for \( n=3+4m,\,5+4m \), while
\( Z_n^{3} \subset C_{3,4}^{m,1} \) for \( n = 4+4m \).
Note that, when \( n = 6+4m \), the final segment of \( Z_n^{3} \) is vertical. Moreover, the following inclusions hold: $C_{3}^{m+1} \subset \bar{C}_{3}^{m+1}$ for $n = 3+4m,\,5+4m, C_{3,4}^{m,1} \subset \bar{C}_{3,4}^{m,1}$ for $n = 4+4m,\,6+4m$ and $Z_n^{3} \subset \underaccent{\bar}{C}_{3,4}^{m,1}$ for $n = 6+4m$. Furthermore, each element of $\bar{C}_{\,3,4}^{\,m,1}$ is isomorphic to a unique element of $\underaccent{\bar}{C}_{\,3,4}^{\,m,1}$, see Figure \ref{f10} $b)$ and $d)$.

\end{notn}

\begin{figure}[h!]
\centering
\begin{tikzpicture}[scale=0.4, transform shape]

\begin{scope}[xshift=3cm] 
\def\squares{{0/0},{0/1},{0/2},{0/4},{1/0},{1/2},{1/4},{2/0},{2/2},{2/3},{2/4}}
\foreach \x/\y in \squares {
    \draw (\x,\y) rectangle (\x+1,\y+1);
    \foreach \dx in {0,1} {
        \foreach \dy in {0,1} {
            \node[circle,fill=black,inner sep=1.5pt] at (\x+\dx,\y+\dy) {};
        }
    }
}
\node at (1,-1) {(a)};
\end{scope}

\begin{scope}[xshift=19cm] 
\def\squares{{0/0},{0/1},{0/2},{0/4},{-2/0},{1/2},{1/4},{-1/0},{2/2},{2/3},{2/4},{-2/1},{-2/2}}
\foreach \x/\y in \squares {
    \draw (\x,\y) rectangle (\x+1,\y+1);
    \foreach \dx in {0,1} {
        \foreach \dy in {0,1} {
            \node[circle,fill=black,inner sep=1.5pt] at (\x+\dx,\y+\dy) {};
        }
    }
}
\node at (1,-1) {(c)};
\end{scope}

\begin{scope}[xshift=10cm] 
\def\squares{{0/0},{0/1},{0/2},{0/4},{1/0},{1/2},{1/4},{2/0},{2/2},{2/3},{2/4},{3/0},{3/-1},{3/-2}}
\foreach \x/\y in \squares {
    \draw (\x,\y) rectangle (\x+1,\y+1);
    \foreach \dx in {0,1} {
        \foreach \dy in {0,1} {
            \node[circle,fill=black,inner sep=1.5pt] at (\x+\dx,\y+\dy) {};
        }
    }
}
\node at (1,-1) {(b)};
\end{scope}

\begin{scope}[xshift=27cm,rotate=270,xscale=-1] 
\def\squares{{0/0},{0/1},{0/2},{0/4},{1/0},{1/2},{1/4},{2/0},{2/2},{2/3},{2/4},{3/0},{3/-1},{3/-2}}
\foreach \x/\y in \squares {
    \draw (\x,\y) rectangle (\x+1,\y+1);
    \foreach \dx in {0,1} {
        \foreach \dy in {0,1} {
            \node[circle,fill=black,inner sep=1.5pt] at (\x+\dx,\y+\dy) {};
        }
    }
}
\node[rotate=270,xscale=-1] at (-1,1.5) {(d)};
\end{scope}

rotate=270,xscale=-1

\end{tikzpicture}

\caption{According to Notation~\ref{n1}, the graphs shown in (a), (b), (c), and (d) correspond, respectively, to
$Z^{3}_{11}=C^{3}_{3}$, $C^{2,1}_{3,4}$, $\bar{C}^{3}_{3}$, and $\protect\underaccent{\bar}{C}^{\,2,1}_{3,4}$.}

\label{f10}
\end{figure}
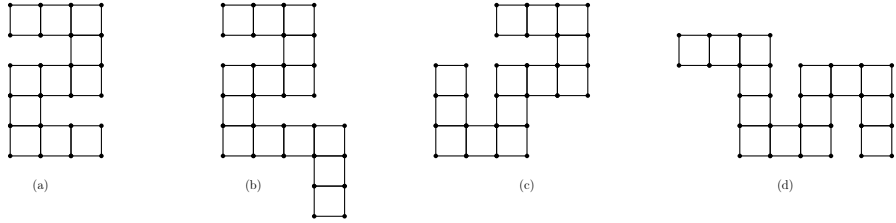

For simplicity, in this section, we will use the notation $g$, $ M(n,S) $, $ M(n,C) $, and $ M(n) $ instead of $g_f$, $ M_f(n,S), M_f(n,C)$, and $ M_f(n) $, respectively.

\begin{thm}\label{t2}
Let $n\ge 3$ and write $n=k+4m$ with $k\in\{3,4,5,6\}$ and $m\in\mathbb{Z}_{\ge0}$. Then the general polyomino chains with $n$ squares that maximize $R_{-1}$ are as follows:
\begin{itemize}
  \item[a)] When $n=3+4m$, the general polyomino chain $C_{\,3}^{\,m+1}$.
  \item[b)] When $n=4+4m$, the maximizing family is $C_{\, 3,4}^{\, m,1}$, with cardinality $m+1$.
  \item[c)] When \( n = 5 + 4m \), the maximizing families are
\( \bar{C}_{3}^{\, m+1} \), \( C_{3,5}^{\, m,1} \), and \( C_{3,4}^{\, m-1,2} \),
yielding a total of \( \frac{(m+1)(m+4)}{2} \) maximal general polyomino chains.

\item[d)] When $n=6+4m$, the maximizing families are $C_{3,6}^{\,m,1}$, $\bar{C}_{\,3,4}^{\,m,1}$, $\bar{C}_{\, 3,4}^{\, m,1}$, $C_{\, 3,4,5}^{\, m-1,1,1}$, and $C_{\, 3,4}^{\, m-2,3}$, yielding a total of \( \frac{(m+1)(m+2)(2m+18)}{12} \) maximal general polyomino chains.
\end{itemize}
Specifically, for $k\in \{3,4,5,6\}$,
$$M(k+4m)= \frac{11}{18}+\frac{1}{3}k+\frac{143}{144}m.$$

\end{thm}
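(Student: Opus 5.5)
Our plan is to specialize the dynamic programming of Theorem~\ref{t1} (Section~\ref{s5}) to $f(x,y)=(xy)^{-1}$ and solve the resulting recurrence in closed form. First we compute the six local contributions:
\[
g(SS)=\tfrac13,\quad g(SC)=\tfrac14+\tfrac18+\tfrac16-\tfrac29,\quad g(CS)=\tfrac1{12}-\tfrac18+\tfrac16+\tfrac29,\quad g(CC)=\tfrac1{16}+\tfrac14,
\]
together with $g(TT)=\tfrac16+\tfrac18+\tfrac1{12}+\tfrac1{16}-\tfrac19$, and then $g(ST)=g(SC)+g(CS)+g(TT)$ and $g(CT)=g(CC)+g(CS)+g(TT)$. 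We also evaluate the base values $M(3,S)$ and $M(3,C)$ and the $n=4,5$ values from the displayed formulas. Since an $ST$ or $CT$ block adds three squares, the natural period is the cycle of three $SS$ steps followed by $ST$, or more precisely one segment of length three followed by a tight turn. This has length $4$ and gain $3g(SS)\!-\!\cdots$; concretely, we expect the best average gain per square to be $\tfrac{143}{576}$, i.e.\ $\tfrac{143}{144}$ per four squares. That is why we work modulo $4$.

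The key steps are as follows.
\begin{enumerate}
\item[(i)] Prove by induction on $m$ that, for $k\in\{3,4,5,6\}$, both $M(k+4m,S)$ and $M(k+4m,C)$ are given by explicit affine functions of $m$, with $M(k+4m)=\tfrac{11}{18}+\tfrac k3+\tfrac{143}{144}m$. For the induction step we expand the recurrence of Theorem~\ref{t1} four levels back ($n-1,\dots,n-4$, also reaching $n-3,n-6,\dots$ through the $T$ terms). The claimed formulas then reduce to finitely many numerical inequalities among the $g$-values, checked once.
\item[(ii)] Track the ties in each $\max$, as in the remark following Corollary~\ref{c1}, to list all extremal compressed action sequences for each residue. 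For residue $3$ the maximum should be attained uniquely by $SS,SS,ST$ blocks. For residue $4$, exactly one of the $m+1$ blocks is lengthened by one $SS$, which gives the count $m+1$. For residues $5$ and $6$, the two or three surplus squares are distributed either as one long segment, as several length-$4$ segments, or by replacing a $TT$ with a plain $SC$/$CC$ turn, which yields the $\bar C$ families. The counts $\tfrac{(m+1)(m+4)}2$ and $\tfrac{(m+1)(m+2)(2m+18)}{12}$ then come from counting multisets of segment positions.
\end{enumerate}

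The main obstacle is step (iii) below, not the arithmetic. By Remark~\ref{r1}, an extremal action sequence need not be realizable, and one action sequence may correspond to several chains. So after obtaining the extremal action sequences, we must run the exhaustive analysis via Algorithm~\ref{A4} and Lemma~\ref{l0} for each of them, using Lemma~\ref{l5} to show that the pivots are forced.

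\begin{enumerate}
\item[(iii)] The goal here is to show two things. First, the realizable instruction sequences are exactly those giving the chains $C$, $\bar C$, $\underaccent{\bar}{C}$ listed in the statement. Second, distinct choices give nonisomorphic chains, which must be handled carefully through the isomorphism between $\bar C_{3,4}^{m,1}$ and $\underaccent{\bar}{C}_{3,4}^{m,1}$ noted after Notation~\ref{n1}. We would first try to show that in every extremal sequence the pivots are all of the form covered by the bullets of Lemma~\ref{l5}, so that the instruction sequence is unique up to the initial reflection. That would make the chains counted in (ii) exactly the maximizers.
\end{enumerate}

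Finally, Theorem~\ref{t1} of Section~\ref{s4} guarantees that the optimal value is attained by a genuine general polyomino chain. Hence $M(n)$ equals the true maximum of $R_{-1}$.
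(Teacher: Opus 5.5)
Your overall route is the paper's: specialize the recurrence of Section~\ref{s5} to $f(x,y)=(xy)^{-1}$, read off a period-four pattern, track ties, and use Lemma~\ref{l5} to pin every pivot. Your $g$-values are also correct, and your induction on $m$ would be a rigorous version of the paper's observation that rows $7$--$10$ of Figure~\ref{f8} repeat.

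\textbf{The gap.} The quantitative target of (i)--(ii) is false, so your induction cannot close. The displayed formula in the statement has the same slip.
\begin{itemize}
\item $Li_n$ is a general polyomino chain with $R_{-1}(Li_n)=M(3,S)+(n-3)g(SS)=\frac{11}{18}+\frac{n}{3}$. This already exceeds $\frac{11}{18}+\frac{k}{3}+\frac{143}{144}m$ for every $m\ge1$.
\item Your proposed rate of $\frac{143}{576}$ per square is below the straight-chain rate $g(SS)=\frac13$. You have conflated $g(ST)$ with the gain of a whole period.
\item The period block is $(ST,CS)$: four squares with gain $g(ST)+g(CS)=\frac{143}{144}+\frac{50}{144}=\frac{193}{144}$. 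Hence $M(k+4m)=\frac{11}{18}+\frac{k}{3}+\frac{193}{144}m=\frac{11}{18}+\frac{n}{3}+\frac{m}{144}$. For example, $M(7)=\frac{425}{144}$, attained only by $(SS,ST,CS)$.
\item Your residue-$3$ description is wrong for the same reason. The unique extremal sequence is $(SS,(ST,CS)^m)$. Blocks such as $(SS,SS,ST)$ are strictly worse, giving $\frac{423}{144}$ at $n=7$.
\end{itemize}

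\textbf{How to make (i) rigorous.} Work in units of $\frac{1}{144}$, where $g(SS),g(SC),g(CS),g(CC),g(ST),g(CT)$ equal $48,46,50,45,143,142$.
\begin{itemize}
\item Rows $n=4,5,6$ give $(M(n,S),M(n,C))=(280,278),(328,326),(376,375)$. Rows $8,9,10$ give $(473,471),(521,519),(569,568)$. This is a uniform shift by $193$.
\item For $n\ge7$ the recurrence involves only rows $n-1$ and $n-3$, and it commutes with adding a common constant to all arguments.
\item Induction then gives $M(n+4,\cdot)=M(n,\cdot)+\frac{193}{144}$ for all $n\ge4$, with identical argmax sets. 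This is exactly the periodic tie structure that (ii) needs.
\item Start the induction at $n=4$, not $n=3$: the $C$-component is not affine for $k=3$, since $M(7,C)-M(3,C)=\frac{195}{144}$.
\end{itemize}

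\textbf{A secondary point on (iii).} Do not try to prove that distinct extremal sequences give nonisomorphic chains; that is false. For $n=8$, $(SS,ST,CS,SS)$ and $(SS,SS,ST,CS)$ give mirror-image U-shapes with arms of lengths $3$ and $4$. The counts $m+1$, $\frac{(m+1)(m+4)}{2}$, \dots{} in the statement are, as in the paper, counts of extremal action sequences. Each such sequence is turned into a single instruction sequence by Lemma~\ref{l5}.
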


\begin{proof}

To begin, carrying out base computation, we have that $g(SS)\approx 0.333$, $g(SC)\approx0.319$, $g(CS)\approx0.347$, $g(CC)\approx0.312$ , $g(ST)\approx0.993$,  $g(CT)\approx0.986$, $M(3,S) \approx 1.611$ and $M(3,C) \approx 1.583$. According to Theorem~\ref{t1}, the next step is to determine $M(3)$. 
From a direct calculation, we have that $M(3)=M(3,S).$ Thus, an extremal \emph{sequence of compressed actions} for $n=3$ is $(SS)$. For $M(4)$, 

$$
M(4,S)= M(3,S) + g(SS)   ~~ \text{and} ~~ M(4,C)= M(3,S) + g(SC).
$$

Then, it follows that, $M(4)=M(4,S)$. Thus, an extremal \emph{sequence of compressed actions} for $n=4$ is $(SS,SS)$. For $ n = 5 $, 
$$
M(5,S) = M(4,S) + g(SS)= M(4,C) + g(CS), ~~ \text{and} ~~M(5,C) = M(4,S) + g(SC).
$$ 
Therefore,  $M(5) = M(5,S)$.  Thus, taking into account the tie, extremal \emph{sequences of compressed actions} for $n=5$ are $(SS,SS,SS)$ and $(SS,SC,CS)$. At this point, observe that $2g(SS)=g(CS)+g(SC).$ Now, for $ n = 6 $, a similar argument yields:  
$$
M(6,S) = M(5,S) + g(SS) = M(5,C) + g(CS),  ~~ \text{and} ~~ M(6,C) = M(3,S) + g(ST).
$$
Moreover, when decomposing $M(6,S)$, and $M(6,C)$ into summands according to Equation (\ref{e1}),  the terms that are not shared are $g(SS)$ and $g(TT)$, respectively. We can continue this process up to $n = 10$. The essential results obtained for $n \leq 10$ are summarized in Figure~\ref{f8} as follows:

\begin{itemize}
    \item[a)] The rows represent the number of squares, while the columns correspond to the last letter of the ending action.
    \item[b)] The underlined cell $(i,j) $ represents that  $M_f(i)=M_f(i,j).$

    \item[c)] Green and blue arrows from the $(i_1,j_1)$-th entry to the $(i_2,j_2)$-th entry indicate that the extremal polyomino chains corresponding to the problem $M_f(i_1,j_1)$ are constructed using the extremal polyomino chains associated with the subproblem $M_f(i_2,j_2)$, in accordance with Theorem~\ref{t1}. As a consequence, multiple arrows emerging from the $(i,j)$-th cell indicate ties in the  \emph{max} argument of problem $M(i,j)$.

    \item[d)] The expressions in the  cells of the $i$-th row represent the non-shared summands of $M(i,S)$ and $M(i,C)$. Notice that, according to Theorem \ref{t1}, these differences fully determine the problem $M(i+1)$, eliminating the need to explicitly retain the values of $M(i,S)$ and $M(i,C)$ in the expression.

\end{itemize} According to the key information summarized in Figure~\ref{f1}, the cells display a cyclic pattern with a period of four. This pattern begins in the seventh row and extends to the tenth row.
Consequently, the recursive construction of $M(n)$, as stated in Theorem~\ref{t1}, also exhibits a cyclic structure in the decision process, again with period four. Based on the previous information, by tracking all the ties shown in Figure~\ref{f1}, we obtain the following:

\begin{itemize}
   \item When \( n = 3 + 4m \) with \( m \ge 0 \), the extremal
\emph{sequence of compressed actions} is given by
\[
(SS,\underbrace{ST,CS}_{m\text{ times}}).
\]
By recursively applying Lemma ~\ref{l5}, this sequence  corresponds to the general polyomino chain \( C^{\,m+1}_3 \).

    \item When $n = 4 + 4m$ with $m \geq 0$, the extremal
\emph{sequence of compressed actions} are given by
    \[
    (SS, \underbrace{ST, CS}_{m-i \, \text{times}}, SS, \underbrace{ST, CS}_{i \, \text{times}})
    \]
    with $0 \leq i \leq m$. By recursively applying Lemma ~\ref{l5}, these sequences correspond to  the family of general polyomino chains $C^{m,1}_{3,4}$.

    \item  When $n = 5 + 4m$ with $m \geq 0$, the extremal
\emph{sequence of compressed actions} are given by
    \[
    (SS, \underbrace{ST, CS}_{m-i \, \text{times}}, SC, CS,\underbrace{ST, CS}_{i \, \text{times}})
    \]
    and
    \[
    (SS, \underbrace{ST, CS}_{m-i-j \, \text{times}}, SS, \underbrace{ST, CS}_{j \, \text{times}}, SS, \underbrace{ST, CS}_{i \, \text{times}})
    \]
    with $0 \leq i \leq m$ and $0 \leq j \leq m-i$. By recursively applying Lemma ~\ref{l5}, these sequences correspond to  the families of general polyomino chains  $\bar{C}^{m+1}_{3}$, $C^{m,1}_{3,5}$ and $C^{m-1,2}_{3,4}$.

    \item  When $n = 6 + 4m$ with $m \geq 0$,  the extremal
\emph{sequence of compressed actions} are given by

   \[
    (SS, \underbrace{ST, CS}_{m-i-j \, \text{times}}, SC,CS, \underbrace{ST, CS}_{j \, \text{times}}, SS,\underbrace{ST, CS}_{i \, \text{times}}),
    \]
    
    \[
    (SS, \underbrace{ST, CS}_{m-i-j \, \text{times}}, SS, \underbrace{ST, CS}_{j \, \text{times}}, SC, CS,\underbrace{ST, CS}_{i \, \text{times}})
    \]
   and
    \[
    (SS, \underbrace{ST, CS}_{m-i-j-k \, \text{times}}, SS, \underbrace{ST, CS}_{k \, \text{times}}, SS, \underbrace{ST, CS}_{j \, \text{times}}, SS, \underbrace{ST, CS}_{i \, \text{times}})
    \]
    with $0 \leq i \leq m$, $0 \leq j \leq m-i$, and $0 \leq k \leq m-i-j$. By recursively applying Lemma ~\ref{l5}, these sequences correspond to   the families of general polyomino chains   $C^{m,1}_{3,6}$, $\bar{C}^{m,1}_{3,4}$, $\underaccent{\bar}{C}_{3,4}^{m,1}$, $C^{m-1,1,1}_{3,4,5}$ and $C^{m-2,3}_{3,4}$.
\end{itemize}
Finally, the concrete computations of $M(n)$ follows directly from Equation (\ref{e1}).

\end{proof}

\begin{rem}
In the proof of the previous result, once the form of all extremal sequences of compressed actions are determined for each case, the application of Algorithms \ref{A1} and \ref{A2} would yield the following. The maximum of $R_{-1}$ over general polyomino chains with $n$ squares is attained by the following families:
\begin{itemize}
  \item[a)] $n=3+4m$: the family $C_{3}^{\,m+1}$.
  \item[b)] $n=4+4m$: the family $C_{3,4}^{\,m,1}$.
  \item[c)] $n=5+4m$: the families $C_{3}^{\,m+1}$, $C_{3,5}^{\,m,1}$, and $C_{3,4}^{\,m-1,2}$.
  \item[d)] $n=6+4m$: the families $C_{3,6}^{\,m,1}$, $C_{3,4}^{\,m,1}$, $Z_n^{3}$, $C_{3,4,5}^{\,m-1,1,1}$, and $C_{3,4}^{\,m-2,3}$.
\end{itemize}

 This highlights the usefulness of Algorithms \ref{A1} and \ref{A2} as crucial tools which, once the dynamic programming analysis is completed, allow one to efficiently obtain at least one maximal structure. Even more,  for $n=3+4m,4+4m$ the above resulting families coincide with all maximal families. In particular, in this case, after completing the dynamic programming analysis, the application of the exhaustive analysis to all extremal action sequences aimed at determining all maximal structures becomes manageable thanks to the information provided by Lemma \ref{l5}, which effectively fixed all pivots in the obtained extremal action sequences and left for each only one instruction sequence to check for validity.
\end{rem}

\begin{figure}[ht]
\begin{center}
\begin{tikzpicture}[scale=0.6, transform shape, every node/.style={scale=0.5,minimum size=1cm, anchor=center},
  arrow/.style={-stealth, thick, green!70!black},
  underline/.style={-stealth, thick,red!70!black},
  every label/.style={font=\small}
]

\matrix (m) [matrix of nodes, row sep=1cm, column sep=2.5cm] {
  \textbf{$i \backslash S_{i}$} & \textbf{S} & \textbf{C}   \\
 
  \textbf{3} & \reduline{$M(3,S)$} & $M(3,C)$  \\
  \textbf{4} & \reduline{$g(SS)$} & $g(SC)$   \\
  \textbf{5} & \reduline{$g(SS)$} & $g(SC)$ \\
  \textbf{6} & \reduline{$g(SS)$} & $g(TT)$\\
  \textbf{7} & \reduline{$g(CS)$} & $g(SS)$  \\
  \textbf{8} & \reduline{$g(SS)$} & $g(SC)$ \\
  \textbf{9} &   \reduline{$g(SS)$} & $g(SC)$ \\
  \textbf{10} &  \reduline{$g(SS)$} & $g(TT)$\\
};


\draw[arrow] (m-3-2) -- (m-2-2); 
\draw[arrow] (m-3-3) -- (m-2-2); 

\draw[arrow] (m-4-2) -- (m-3-2); 
\draw[arrow] (m-4-3) -- (m-3-2); 
\draw[arrow] (m-4-2) -- (m-3-3); 

\draw[arrow] (m-5-2) -- (m-4-2); 
\draw[arrow] (m-5-2) -- (m-4-3); 
\draw[->, blue, dashed] (m-5-3) -- (m-2-2); 

\draw[arrow] (m-6-2) -- (m-5-3); 
\draw[->, blue, dashed] (m-6-3) -- (m-3-2); 

\draw[arrow] (m-7-2) -- (m-6-2); 
\draw[arrow] (m-7-2) -- (m-6-3); 
\draw[arrow] (m-7-3) -- (m-6-2); 
\draw[->, blue, dashed] (m-7-3) -- (m-4-2); 

\draw[arrow] (m-8-2) -- (m-7-2); 
\draw[arrow] (m-8-2) -- (m-7-3); 
\draw[arrow] (m-8-3) -- (m-7-2); 
\draw[->, blue, dashed] (m-8-3) -- (m-5-2); 

\draw[arrow] (m-9-2) -- (m-8-2); 
\draw[arrow] (m-9-2) -- (m-8-3); 
\draw[->, blue, dashed] (m-9-3) -- (m-6-2); 

\end{tikzpicture}
\end{center}
\caption{Graphical representation of the behavior observed in Theorem~\ref{t2}. }
\label{f8}
\end{figure}
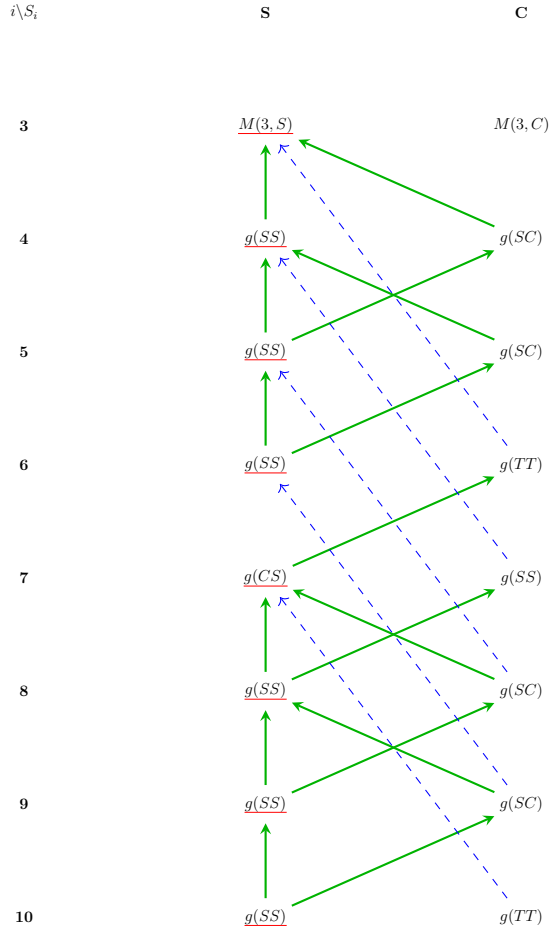

\section{Final Discussion and Code}\label{s7}

In this paper, we introduced a general framework, grounded on dynamic programming, to identify extremal general polyomino chains with respect to any degree-based index. As a concrete application, we characterized the extremal general polyomino chains that maximize the generalized Randi\'c index with parameter $\alpha=-1$. We expect that the proposed methodology will be useful for tackling related extremal problems. In particular, we leave open the problem of characterizing extremal general polyomino chains for the remaining values of the parameter. Moreover, an important open problem is the design of more efficient algorithms for identifying valid action sequences, as well as the development of additional necessary or sufficient conditions, such as Lemma~\ref{l5}; both of these directions would contribute to facilitating the identification of all extremal polyomino chains.

Finally, we provide access to our implementation of the constructive method described in Theorem~\ref{t1}, Algorithms~\ref{A1}, \ref{A2}, \ref{A3} and the exhaustive analysis of action sequences. The code (with detailed comments) efficiently computes, for any degree-based index and any prescribed number of squares $n$, a general polyomino chain attaining either the maximum or the minimum value, in linear time with respect to $n$ and the whole set of all extremal general polyomino chains with a computational complexity of $O(2^{n/2}nN)$; where $N$ is the number of extremal action sequences. The implementation is available at:
\href{https://colab.research.google.com/drive/1s4SwaMZFGjoxVJO_1nDRUNXs_6-gXy5g?usp=sharing}{\textit{Link to the Code}}.

\subsection*{Funding Information}
Sayl\'e Sigarreta was supported by CONAHCYT 2023-2024 project CBF2023-2024-1842. M. Montes-y-Morales and H. Cruz-Su\'arez received support from VIEP through grant VIEP-00544-2025.

\bibliographystyle{elsarticle-num}
\bibliography{Bib}

\end{document}